\newtheorem{theorem}{Theorem}[section]
\newtheorem{lemma}{Lemma}[section]
\newtheorem{remark}{Remark}[section]
\newtheorem{assumption}{Assumption}[section]
\newtheorem{example}{Example}[section]
\newtheorem{definition}{Definition}[section]
\newtheorem{property}{Property}[section]
\numberwithin{equation}{section}
\newcommand{\Rd}{\mathbb{R}^d}
\newcommand{\px}{\psi_{(\xi)}}
\newcommand{\pxsop}{\frac{\psi_{(\xi)}^2}{\psi}}
\newcommand{\tr}{\operatorname{tr}}
\begin{document}

\title[Degenerate Hessian equations]
{Representation and regularity for the Dirichlet problem for real and complex degenerate Hessian equations}
\author{Wei Zhou}
\address{School of Mathematics, University of Minnesota}
\email{zhoux123@math.umn.edu}

\date{November 18, 2013}

\begin{abstract}
\noindent
We consider the Dirichlet problem for positively homogeneous, degenerate elliptic, concave (or convex) Hessian equations. Under natural and necessary conditions on the geometry of the domain, with the $C^{1,1}$ boundary data, we establish the interior $C^{1,1}$-regularity of the unique (admissible) solution, which is optimal even if the boundary data is smooth. Both real and complex cases are studied by the unified (Bellman equation) approach.
\end{abstract}

\maketitle

\tableofcontents

\section{Introduction}

\subsection{Settings and background}\label{section1} In this paper we are concerned with the representation and regularity theory for the Dirichlet problem for degenerate elliptic, concave (or convex) Hessian equations, inspired by \cite{MR806416, MR992979, MR1211724, MR1284912, MR2487853, MR3055586}.

We consider the Dirichlet problem 
\begin{equation}\label{dp}
\left\{\begin{array}{rcll}
F(u_{xx})&=&f  &\text{in }D\\ 
u_{xx}&\in&\overline\Gamma &\text{in }D\\
u&=&\varphi  &\text{on }\partial D,
\end{array}
\right.
\end{equation}
where $D$ is a bounded domain in $\Rd$ ($d\ge2$) with smooth boundary, $\Gamma$ is a connected open subset in the space $\mathbb S^d$ of real symmetric $d\times d$ matrices, $F$ is a function from $\mathbb S^d$ to $\mathbb R$, $u_{xx}$ is the Hessian matrix of $u$, and $f: D\rightarrow \mathbb R$ and $\varphi: \partial D\rightarrow \mathbb R$ are bounded and Borel measurable functions whose precise regularity assumptions will be given in each situation under consideration. 

The appearance of the second relation in the system (\ref{dp}) is due to the possible nonlinearity of the function $F$, which results in the possibly multiple components of each level set of $F$ in $\mathbb S^d$. We suppose the ellipticity of the closed set $\overline \Gamma$ in the sense of \cite{MR1284912}, i.e.
\begin{equation}
\overline\Gamma+\overline{\mathbb S_+^d}\subset\overline\Gamma, \mbox{ with }\overline{\mathbb S_+^d}=\{S\in \mathbb S^d: S\ge0\},
\end{equation}
and the $\overline{\mathbb S_+^d}$-monotonicity of $F$ on $\Gamma$, i.e.
\begin{equation}\label{dege}
F(M+S)\ge F(M),\quad \forall M\in\Gamma, S\in\overline{\mathbb S_+^d},
\end{equation}
as well as the convexity of $\Gamma$ and concavity of $F$ on $\overline\Gamma$, so that the fully nonlinear second-order partial differential equation in (\ref{dp}) is degenerate elliptic and concave.

We also take account of the complex analogy to (\ref{dp}), i.e. the Dirichlet problem for the fully nonlinear, degenerate elliptic, concave equations
\begin{equation}\label{cdp}
\left\{\begin{array}{rcll}
G(u_{z\bar z})&=&f  &\text{in }D\\ 
u_{z\bar z}&\in&\overline\Theta &\text{in }D\\
u&=&\varphi  &\text{on }\partial D,
\end{array}
\right.
\end{equation}
where $D$ is a bounded smooth domain in $\mathbb C^d$ ($d\ge2$), $\Theta$ is a elliptic, convex, connected open subset in the space $\mathbb H^d$ of Hermitian $d\times d$ matrices, $G$ is a real-valued, $\overline{\mathbb H_+^d}$-monotonic and concave function on $\overline\Theta$, $u_{z\bar z}$ is the complex Hessian  of $u$, and $f: D\rightarrow \mathbb R$ and $\varphi: \partial D\rightarrow \mathbb R$ are bounded and Borel measurable real-valued functions.

If there exists a constant $\delta>0$, such that
$$\delta\operatorname{tr}S\le F(M+S)-F(S)\le \delta^{-1}\operatorname{tr}S, \quad\forall M\in\Gamma, S\in\overline{\mathbb S_+^d},$$
then $F$ is uniformly $\delta$-nondegenerate elliptic. In the regularity theory for general fully nonlinear, uniformly nondegenerate elliptic, concave (or convex) equations, the first major breakthrough was made by Evans \cite{MR649348} and Krylov \cite{MR661144}, in which they established the interior $C^{2,\alpha}$ a priori estimate of the solution. Since then, significant contributions have been made by many mathematicians in their works, e.g. \cite{MR678347, MR688919, MR701522, MR765302, MR780073, MR984219, MR1005611}, etc.

In this paper $F$ and $G$ are always supposed to be degenerate elliptic. In this degenerate situation, generally, the $C^{2,\alpha}$-regularity of the solution breaks down, and the regularity theory for general equations, even the linear ones, is not well-developed. Meanwhile, because of the degeneracy, for the Dirichlet problem in a bounded domain, the interior regularity is dramatically influenced by and normally no better than that of the boundary data, unlike the nondegenerate cases. For quite a long time in history, and even nowadays in some situations, usually each particular degenerate equation is treated by somewhat restrictive approach which may not work for many others. Great progress on the solvability and regularity theory for the Dirichlet problem for degenerate Bellman equations was made by Krylov in \cite{MR992979} by probabilistic method and \cite{MR1325541} through analysis of PDE terms, in which under general settings, $C^{1,1}$-regularity of the solution up to the boundary was obtained with $C^{3,1}$ boundary data.

A considerable portion of the study on fully nonlinear elliptic equations has been focusing on the so-called Hessian equations. Many geometrically interesting partial differential equations are of Hessian type, e.g. real and complex Monge-Amp\`ere equations. We recall that $F$ defined on $\mathbb S^d$ (resp. $G$ defined on $\mathbb H^d$) is said Hessian if $F(M)$ (resp. $G(M)$) depends only on the eigenvalues of the symmetric matrix $M$ (resp. Hermitian matrix $M$). In other words, $F$ (resp. $G$) is Hessian if it is invariant under the action of $\mathbb O^d$ (resp. $\mathbb U^d$). In this paper we always assume $F$ and $G$ are Hessian when we work on regularity issues.
 
For nondegenerate Hessian equations, the regularity has been studied in depth under a wide variety of situations by many people. We only refer to \cite{MR806416} by Caffarelli, Nirenberg and Spruck and \cite{MR1368245} by Trudinger for real cases under general settings and  \cite{MR2128299} by S.-Y. Li for the complex general cases. 

We here pay special attention to the regularity for the Dirichlet problem for (both real and complex) degenerate Hessian equations. In general, the interior regularity of the solution is no better than that of the boundary data, and the best regularity of the (generalized) solution is $C^{1,1}$, even if the boundary data is smooth. 

For the real degenerate Monge-Amp\`ere equation (i.e. $F=\det$ in (\ref{dp})), when $D$ is strictly convex, $\varphi$ is $C^{3,1}$ and $\overline\Gamma=\overline{\mathbb S_+^d}$, the global optimal $C^{1,1}$-regularity of the convex solution has been established by Caffarelli, Nirenberg and Spruck in \cite{MR864651}, Krylov in \cite{MR992979} and P. Guan, Trudinger and X.-J. Wang in \cite{MR1687172}, under various assumptions on $f$. For the complex degenerate Monge-Amp\`ere equation (i.e. $G=\det$ in (\ref{cdp})), when $D$ is strictly pseudoconvex, $\varphi$ is $C^{3,1}$, $(f_+)^{1/d}$ is globally Lipschitz and quasi-convex, and $\overline\Theta=\overline{\mathbb H_+^d}$, the global optimal $C^{1,1}$-regularity of the plurisubharmonic solution was obtained by Krylov in \cite{MR992979}. After checking the assumptions on the main theorems in  \cite{MR992979} and \cite{MR1325541}, one understands that the global $C^{1,1}$-regularity was established for $1$-homogeneous Hessian equations under suitable conditions on the domain and the right-hand side $f$. This seems to be the only global $C^{1,1}$-regularity result for degenerate complex Hessian equations. In \cite{MR2038151}, Ivochkina, Trudinger and X.-J. Wang provided a new and shorter proof of the boundary second derivative estimate obtained in \cite{MR1325541}, which resulted in the global $C^{1,1}$-regularity for degenerate real Hessian equations under general settings. In \cite{MR2254601}, under weaker regularity assumptions on $f$, H. Dong obtained the global $C^{1,1}$-regularity result for degenerate real Hessian equations described by elementary symmetric functions of eigenvalues of $u_{xx}$.

In the aforementioned results of $C^{1,1}$-regularity up to the boundary, the associated boundary data are all assumed to be in the class $C^{3,1}$, which is necessary (see, e.g. Ex. 1, \cite{MR864651}). To obtain the optimal $C^{1,1}$-regularity of the solution in the domain, the necessary regularity of the boundary data is $C^{1,1}$. For the degenerate complex Monge-Amp\`ere equation, in \cite{MR0445006}, Bedford and Taylor showed that if $D$ is the unit ball, $\varphi\in C^{1,1}(\partial D)$, $f^{1/d}\in C^{1,1}(D)$ and $f\ge0$, then the plurisubharmonic solution is of $C^{1,1}_{loc}(D)\cap C(\bar D)$. For the real homogeneous Monge-Amp\`ere equation $\det(u_{xx})=0$, in \cite{MR766792}, Trudinger and Urbas proved that when the strictly convex domain $D$ is $C^{1,1}$ smooth, $\varphi\in C^{1,1}(\bar D)$, then the convex solution is of $C^{1,1}_{loc}(D)\cap C^{0,1}(\bar D)$. In \cite{MR878440}, Trudinger proved that the (admissible) solution of the Dirichlet problem for $k$-Hessian equations is of $C^{1,1}_{loc}(D)\cap C^{0,1}(\bar D)$ when the domain $D$ is a ball and $f^{1/k}\in C^{1,1}(\bar D)$.

Bedford and Taylor's approach in \cite{MR0445006} made use of the transitivity of the automorphism group of the unit ball in $\mathbb C^d$, which is not applicable for general strictly pseudoconvex domains. The aforementioned result in \cite{MR766792} heavily relied on the condition $f\equiv0$, as in this situation the Monge-Amp\`ere equation reduces to $\lambda_{\operatorname{min}}(u_{xx})=0$, which implies that the epigraph of the solution is the convex envelope of the epigraph of the boundary data $\varphi$. The result in \cite{MR878440} heavily relied on the condition that $D$ is a ball, so that the global barrier of the domain has a simple and symmetric formula. 

Due to lack of the interior derivative estimate for general degenerate elliptic equations, the above mentioned three works seem to be the only well-known results on the interior optimal $C^{1,1}$-regularity for degenerate Hessian equations. The only general result on interior first derivative estimate for fully nonlinear degenerate elliptic equations is due to Krylov in \cite{MR1211724}. In the author's recent work \cite{InteriorRegularityI}, an interior second derivative estimate for fully nonlinear degenerate elliptic equations was obtained by following Krylov's approach in \cite{MR1211724}. In both \cite{MR1211724} and \cite{InteriorRegularityI}, the crucial assumption that $F(\cdot, u_x, u, x)$, as a function on $\mathbb S^d$, is of Hessian type plays an important role in canceling badly-uncontrolled terms near the boundary. As the first application of the interior $C^2$ estimate obtained in \cite{InteriorRegularityI}, in \cite{InteriorRegularityII}, 
we generalized Bedford and Taylor's interior $C^{1,1}$-regularity result for degenerate complex Monge-Amp\`ere equations to any strictly pseudoconvex, $C^3$ bounded domain, and Trudinger and Urbas's interior $C^{1,1}$-regularity result for real homogeneous Monge-Amp\`ere equations to general degenerate real Monge-Amp\`ere equations (i.e. $f\ge0$), under the stronger $C^3$-regularity assumption on $\partial D$.

\subsection{Main results} In this paper we generalize the results in \cite{InteriorRegularityII} to a much broader class of equations. 

In our main Theorem \ref{ghessthm} for the Dirichlet problem (\ref{dp}), $\Gamma$ is assumed to be an elliptic, convex, $\mathbb O^d$-invariant open cone and $F$ is $1$-homogeneous, concave and Hessian on $\overline\Gamma$, satisfying $F|_{\partial \Gamma}=0$ and $F|_{\Gamma}>0$ (i.e. Assumptions \ref{a1} and \ref{a2}). By representing the system (\ref{dp}) as a Dirichlet problem for a normalized degenerate Bellman equation  with constant coefficients, in the spirit of Krylov \cite{MR1284912}, we then obtain a stochastic representation $v$ of the (probabilistic) solution (see (\ref{grp})) and establish the optimal regularity results under natural and necessary conditions on the geometry of the domain (see Assumption \ref{a3}), by applying the results in \cite{InteriorRegularityII}. Our main results are the following.
\begin{itemize}
\setlength{\itemsep}{1pt}
\item  If $\varphi,f\in C^{0,1}(\bar D)$, then $v\in C^{0,1}_{loc}(D)\cap C(\bar D)$.
\item If $\varphi\in C^{1,1}(\bar D)$, $f\in C^{0,1}(\bar D)$ and quasi-convex in $\bar D$, then $v\in C^{1,1}_{loc}(D)\cap C^{0,1}(\bar D)$ 
 is the unique strong solution to (\ref{dp}).
\end{itemize}
Although this fact is not needed in this paper, we note that when the probabilistic solution $v$ is continuous, it is the unique viscosity solution to (\ref{dp}). As discussed and explained in Remarks \ref{rmk53} and \ref{explain}, the $1$-homogeneity assumption on $F$ can be weaken by certain growth conditions of it in the direction of the identity matrix.

We are also able to treat the degenerate complex Hessian equations by a similar approach, since the interior second derivative estimate obtained in \cite{InteriorRegularityII} holds in perfect analogy in the case of complex Bellman equations. As the complex counterpart of Theorem \ref{ghessthm}, in Theorem \ref{cghessthm} for the Dirichlet problem (\ref{cdp}), where $\Theta$ is  an elliptic, convex, $\mathbb U^d$-invariant open cone in $\mathbb H^d$ and $G$ is $1$-homogeneous, concave and Hessian on $\overline\Theta$, satisfying $G|_{\partial \Theta}=0$ and $G|_{\Gamma}>0$ (i.e. Assumptions \ref{ca1} and \ref{ca2}), under natural and necessary conditions on the geometry of the domain (see Assumption \ref{ca3}), our main results on the probabilistic solution $v$ (defined by (\ref{cgrp})) are the following.
\begin{itemize}
\setlength{\itemsep}{1pt}
\item  If $\varphi,f\in C^{0,1}(\bar D)$, then $v\in C^{0,1}_{loc}(D)\cap C(\bar D)$.
\item If $\varphi, f\in C^{1,1}(\bar D)$, then $v\in C^{1,1}_{loc}(D)\cap C^{0,1}(\bar D)$ 
 is the unique strong solution to (\ref{cdp}).
\end{itemize}

A fairly large portion of this paper is concerned with specific examples. They include a wide variety of geometrically interesting equations which demonstrate the broad applicability of our theorems and interpret our assumptions from geometric viewpoints. Many of them are celebrated equations for which we establish new regularity results, while some of them have not been seen by the author in previous literature, but might possibly be interesting in the author's further study on fully nonlinear elliptic equations.

\subsection{Brief outline of the paper} The paper is organized as follows: We end Section 1 by introducing basic notation in the next subsection. 

In Sections \ref{section2} and \ref{section3} we pay particular attention to (real and complex, respectively) Hessian equations defined by Hyperbolic polynomials. Thanks to G{\aa}rding's beautiful theory  of hyperbolic polynomials, many conditions needed can just be derived, rather than assumed. We obtain representation and regularity results on them, which already generalize the results in the aforementioned previous literature in several aspects. 

Section \ref{secexa} discusses celebrated examples of two types of degenerate Hessian equations to which the theorems in Sections \ref{section2} and \ref{section3} can directly be applied. In each specific example, we interpret our assumptions and results from geometric viewpoints.

In Sections \ref{section5} and \ref{section6} we state and prove our main theorems on representation and regularity for (real and complex, respectively) degenerate Hessian equations in general settings. We feel that not putting these abstract and general results in the very beginning sections might make the paper a little bit more readable for nonexperts in the general theory of fully nonlinear partial differential equations.  

The last Section \ref{secexamore} is devoted to further examples of degenerate Hessian equations, which demonstrate the broad applicability of our main results in Sections \ref{section5} and \ref{section6}.

\subsection{Notation}\label{notation}

Throughout the paper, the summation convention for repeated indices is assumed.

We adopt the following notation on useful vector spaces, their elements, subsets and operations.
\begin{itemize}
\setlength{\itemsep}{1pt}
\item $\Rd$: the $d$-dimensional real space whose elements are denoted by $x=(x^1,\cdots,x^d)$;
\item $x\cdot y=x^iy^i$: the inner product for $x,y\in \Rd$; and $|x|^2:=x\cdot x$;
\item $\mathbb S^d, \mathbb S^d_+, \overline{\mathbb S_+^d}$: the sets of symmetric, nonnegatively-definite symmetric, positively-definite symmetric $d\times d$ matrices, respectively;
\item $\mathbb O^d$: the set of orthogonal $d\times d$ matrices;
\item $\mathbb C^d$: the $d$-dimensional complex space whose elements are denoted by
$z=(z^1,..., z^d)=(x^1+ix^{d+1},...,x^k+ix^{d+k},..., x^d+ix^{d+d})$;
\item $\operatorname{Re}z=(x^1,...,x^d), \operatorname{Im}z=(x^{d+1},...,x^{d+d}), \bar z=(\bar z^1,...,\bar z^d)=\operatorname{Re}z-i\operatorname{Im}z$: the real part, imaginary part and conjugate of $z$, respectively;
\item $\mathbb H^d, \mathbb H^d_+, \overline{\mathbb H_+^d}$: the sets of Hermitian, nonnegatively-definite Hermitian, positively-definite Hermian $d\times d$ matrices, respectively;
\item $\mathbb U^d$: the set of unitary $d\times d$ matrices;
\item $\det M, \operatorname{tr} M$: the determinant and trace of square matrix $M$;
\item $M^*$: the transpose of the matrix $M$ with real entries;
\item $\bar M^{*}$: the conjugate transpose of the matrix $M$ with complex entries.
\item $I_{d\times d}$: the identity $d\times d$ matrix;
\end{itemize}

For sufficiently smooth real-valued functions $u$ and $v$  defined on $\Rd$ and $\mathbb C^d$, respectively, we use the following notation on differentiation:
\begin{itemize}
\setlength{\itemsep}{3pt}
\item$\displaystyle u_x=(u_{x^1},\cdots,u_{x^d})$:  the gradient of $u$;
\item$\displaystyle u_{xx}=(u_{x^ix^j})_{d\times d}$: the Hessian matrix of second derivatives of $u$;
\item$\displaystyle u_{(\xi)}:=u_{x^i}\xi^i,\ u_{(\xi)(\eta)}:=u_{x^ix^j}\xi^i\eta^j, \ \forall \xi,\eta\in\Rd$;
\item$v(x):=v(z)$, with $x=(\operatorname{Re}z,\operatorname{Im}z)\in\mathbb R^{2d}$; in other words, we may view $v$ as a function from $\mathbb R^{2d}$ to $\mathbb R$;
\item$\displaystyle v_{z^k}=(v_{x^k}-iv_{x^{d+k}})/2,\ v_{\bar z^k}=(v_{x^k}+iv_{x^{d+k}})/2,\ \forall 1\le k\le d;$
\item$v_{z^k\bar z^j}=(v_{z^k})_{\bar z^j},\ v_{z\bar z}=(v_{z^k\bar z^j})_{d\times d}$;
\item$v_{(\xi)}=v_{z^k}\xi^k+v_{\bar z^k}\bar\xi^k,\ v_{(\xi)(\eta)}=(v_{(\xi)})_{(\eta)},\ \forall \xi,\eta\in\mathbb C^d;$ As a result, if $p:=(\operatorname{Re}\xi,\operatorname{Im}\xi)$ and $q:=(\operatorname{Re}\eta,\operatorname{Im}\eta)$, then
$$v_{(\xi)}(z)=v_{(p)}(x),\ v_{(\xi)(\eta)}(z)=v_{(p)(q)}(x).$$
\end{itemize}

We also speak about the following function spaces:
\begin{itemize}
\setlength{\itemsep}{1pt}
\item $C^k(D)=C^k_{loc}(D), \ k=1,2,...,\mbox{ or }\infty$: the set of functions having all continuous partial derivatives of order $\le k$  in  $D$; 
\item $C^{k}(\bar D)$: the set of functions in $C^k(D)$ all of whose partial derivatives of order $\le k$ have continuous extension to $\bar D$;
\item $C^{k,\gamma}(D)=C_{loc}^{k,\gamma}(D)$ (resp. $C^{k,\gamma}(\bar D)$): the H\"older subspace of $C^{k}(D)$ (resp. $C^k(\bar D)$) consisting of functions whose $k$-th order partial derivatives are locally H\"older continuous (resp. globally H\"older continuous) with exponent $\gamma$ in $D$, where $0<\gamma\le 1$. (In particular, when $k=0$, $\gamma=1$, we have $C^{0,1}(D)=C_{loc}^{0,1}(D)$ (resp. $C^{0,1}(\bar D)$), the space of locally Lipschitz functions (resp. globally Lipschitz functions) in $D$.)
\end{itemize}

We say a function $u$ on $D$ is $K$-quasi-convex if $u+K|x|^2/2$ is convex.

\section{Real Hessian equations defined by hyperbolic polynomials}\label{section2}

In this section we consider the Dirichlet problem for the real Hessian equation
\begin{equation}\label{hyphess}
\left\{\begin{array}{rcll}
H_m(u_{xx})&=&f^m  &\text{in }D\\ 
u_{xx}&\in&\overline\Gamma &\text{in }D\\
u&=&\varphi  &\text{on }\partial D,
\end{array}
\right.
\end{equation}
where $D$ is a bounded smooth domain in $\Rd$ ($d\ge2$), $H_m: \mathbb S^d\rightarrow\mathbb R$ is an $m$-th degree homogeneous hyperbolic polynomial with respect to the identity matrix, $\Gamma$ is the associated G{\aa}rding cone, and $f: D\rightarrow [0,\infty)$ and $\varphi: \partial D\rightarrow \mathbb R$ are continuous functions. The appearance of the second relation is due to the fully nonlinearity of $H_m$ when $m\ge 2$.  

\subsection{Hyperbolic polynomials and G{\aa}rding cones}

We start from a short review on hyperbolic polynomials, G{\aa}rding cones and their properties. 

\begin{definition}
A homogeneous polynomial $H_m$ of degree $m$ on the real vector space $\mathbb V$ is hyperbolic with respect to  $A$ if $H_m(A)>0$ and for all $M\in \mathbb V$, the polynomial (in $t$) $p(t)=H_m(tA+M)$ has $m$ real roots. Equivalently, we can write
$$H_m(tA+M)=H_m(A)\prod_{k=1}^m\Big(t+\lambda_k(M)\Big),$$
where $H_m(A)>0$, and $\lambda_k(M)=\lambda_k[H_m;A](M), 1\le k\le m$, are called the eigenvalues of $M$ for $H_m$ in the direction of $A$, or the $A$-eigenvalues of $M$ for $H_m$.
\end{definition}

 In this section, $\mathbb V$ is the  $d(d+1)/2$-dimensional real vector space $\mathbb S^d$, while in the next section, $\mathbb V=\mathbb H^d$, a $d^2$-dimensional vector space over $\mathbb R$. Throughout this article, $A=I$, the identity matrix $I$ in the corresponding space $\mathbb V$ unless otherwise specified.

For example, the determinant,
$$\det: \mathbb S^d\rightarrow \mathbb R;\quad M\mapsto \det(M),$$ is a $d$-th degree homogeneous hyperbolic polynomial (with respect to $I$). In this situation the eigenvalues $\lambda_k(M), 1\le k\le d$, are the usual eigenvalues of $M$ as a symmetric matrix, and the partial differential equation in (\ref{hyphess}) is the standard real Monge-Amp\`ere equation.
\begin{definition}
The set
$$\Gamma=\Gamma_{H_m}:=\{M\in \mathbb V: \lambda_k(M)>0, \forall 1\le k\le m\}$$
is called the G{\aa}rding cone associated to $H_m$. 
\end{definition}

For the example discussed above, the G{\aa}rding cone associated to the hyperbolic polynomial $\det$ over $\mathbb S^d$ is $\mathbb S_+^d$.
In this situation, the second relation in (\ref{hyphess}) is 
\begin{equation}\label{sense}
u_{xx}\in \overline{\mathbb S_+^d} \mbox{ in } D,
\end{equation}
 which means, we seek a convex solution to the Dirichlet problem for the real Monge-Amp\`ere equation. (See Remark \ref{gammasubh} for the generalized interpretation on the relation like (\ref{sense}) when $u\notin C^2$.)

The following properties of the hyperbolic polynomial $H_m$  and its associated G{\aa}rding cone $\Gamma_{H_m}$ are well-known: 
\begin{property}\label{ppt1}
The set $\Gamma$ is an open convex cone, with vertex at the origin. 
\end{property}
\begin{property}\label{ppt2}
 The function $H_m^{1/m}$ is concave on $\Gamma$. $H_m|_{\partial\Gamma}=0$, $H_m|_{\Gamma}>0$.
\end{property}
\begin{property}\label{ppt3}
If for each $M\in \mathbb S^d$, we list its eigenvalues in order, from small to large, i.e.
$$\lambda_1(M)\le \lambda_2(M)\le\cdots\le \lambda_m(M),$$
then for all $M\in \mathbb S^d$ and all $S\in \Gamma$, we have
$$\lambda_k(M)<\lambda_k(M+S), \quad\forall 1\le k\le m.$$
Consequently, for all $M\in \mathbb S^d$ and all $S\in \Gamma$,
\begin{equation}\label{strictmono}
H_m(M)<H_m(M+S).
\end{equation}
\end{property}
\begin{property}\label{ppt4}
Suppose that $H_m$ is hyperbolic with respect to $A$, $m>1$ and $B\in \Gamma_{H_m}$. Let $(H_m)_{(B)}$ be the directional derivative of $H_m$ in the direction $B$, i.e.,
\begin{equation}\label{dirder}
(H_m)_{(B)}(M)=\frac{d}{dt}H_m(M+tB)\Big|_{t=0}.
\end{equation}
Then $G_{m-1}:=(H_m)_{(B)}$ is an $(m-1)$-th degree homogeneous hyperbolic polynomial with respect to $A$, and
\begin{equation}\label{elmtyppt}
\Gamma_{H_m}\subset\Gamma_{G_{m-1}}.
\end{equation}
\end{property}

We refer to \cite{MR3055586} for a simple, self-contained proof of all these properties when $\mathbb V$ is any finite-dimensional real vector space.

\subsection{Assumptions and theorems}
First, due to (\ref{strictmono}), the Dirichlet problem (\ref{strictmono}) is elliptic, provided that $\Gamma$ is an elliptic set (see \cite{MR1284912}, Definition 2.1). To this end, we impose the following positivity assumption on $\Gamma$:

\begin{assumption}[Ellipticity]\label{ellip} Assume that 
\begin{equation}\label{ellipticcondition}
\overline\Gamma+\xi\xi^*\subset\overline\Gamma,\ \forall \xi\in\Rd,
\end{equation}
equivalently (since $0$ is the vertex of the open convex cone $\Gamma$),
\begin{equation}
\mathbb S^d_+\subset\Gamma.
\end{equation}
\end{assumption}

Under Assumption \ref{ellip}, we have
$$H_m(M)\le H_m(M+S), \quad\forall M\in \mathbb S^d, S\in \overline{\mathbb  S_+^d},$$
which is the definition of (degenerate) ellipticity in the fully nonlinear theory. Recall the allowance of $f$ being zero, so the Dirichlet problem (\ref{hyphess}) is degenerate elliptic if there exist $M\in\overline\Gamma$ and $S\in\overline{\mathbb S_+^d}\setminus\{0\}$, such that
$$H_m(M)=H_m(M+S).$$
Because of (\ref{strictmono}), the Dirichlet problem (\ref{hyphess}) is degenerate elliptic if and only if
\begin{equation}\label{degenerate}
\partial \Gamma\cap\overline{\mathbb S_+^d}\setminus\{0\}\ne\emptyset.
\end{equation}

Second, we suppose the partial differential equation in (\ref{hyphess}) is a Hessian equation, i.e.,
\begin{assumption}[Symmetry]
For any $O\in\mathbb O^d$, $M\in \mathbb S^d$,
\begin{equation}
H_m(M)=H_m(OMO^*).
\end{equation}
\end{assumption}

\begin{remark}
This assumption of symmetry will play an important role in our approach on studying the $C^{0,1}$- and $C^{1,1}$-regularity of the solution. If one is only interested in the continuous solution, this assumption can be dropped.
\end{remark}

We proceed by introducing the notion of strict $\Gamma$-convexity of the domain. Let $\operatorname{II}$ denote the second fundamental form of $\partial D$ with respect to the inward-pointing unit normal $\vec n$, and $\operatorname P_{\vec n}$ the orthogonal projection onto the line in the direction of $\vec n$. Our assumption on the  geometry of the domain is as follows: 
\begin{assumption}[Strict $\Gamma$-convexity of the domain]\label{hypbdry} For each $x\in \partial D$, there exists a sufficiently large scalar $t_0$, such that for all $t\ge t_0$, \begin{equation}\label{curvature}
\operatorname{II}_x+t(\operatorname P_{\vec n})_x\in \Gamma.
\end{equation}
\end{assumption}

We say that the domain $D$ is stricly $\Gamma$-convex if $D$ satisfies Assumption \ref{hypbdry}. In particular, the usual strict convexity is the strict $\mathbb S_+^d$-convexity.

Under Assumption \ref{hypbdry}, by Theorem 5.12 in \cite{MR2487853}, there exists a function $\rho\in C^{\infty}$, such that
\begin{itemize}
\setlength{\itemsep}{1pt}
\item $D=\{\rho<0\}$;
\item $\rho_x\ne0$ on $\partial D$;
\item $\exists \epsilon>0$ and $R>0$ such that for all $C\ge R$,
\begin{equation}\label{zai}
\big[C\big(\rho-\epsilon|x|^2/2\big)\big]_{xx}\in\overline\Gamma.
\end{equation}
\end{itemize} 

\begin{definition}\label{globalbarrier}
The function $\rho$ is called a global defining function for $\partial D$. We define $\psi=-C\rho$ and call it a global barrier for $D$.
\end{definition}

In order to introduce the probabilistic solution to (\ref{hyphess}), we let $w_t$ be a Wiener process of dimension $d$, and $\mathfrak A$ be the set of progressively-measurable processes $\alpha=(\alpha_t)_{t\ge0}$ with values in $\Gamma$ for all $t\ge0$. Introduce the family of controlled diffusion processes
\begin{equation}\label{contdiff}
x_t^{\alpha,x}=x+\int_0^t\sqrt{2a(\alpha_s)}dw_s,  \ \forall \alpha\in\mathfrak A,
\end{equation}
where $a=a(\alpha)$ is a $\overline{\mathbb S^d_+}$-valued function on $\Gamma$, defined by
\begin{equation}\label{aalpha}
a^{ij}(\alpha)=\frac{(H_m)_{\gamma^{ij}}(\alpha)}{\operatorname{tr}\big[(H_m)_{\gamma^{ij}}(\alpha)\big]}, \ 1\le i,j\le d,
\end{equation}
with
$$(H_m)_{\gamma^{ij}}(\alpha)=\frac{\partial}{\partial \gamma^{ij}}H_m(\gamma)\Big|_{\gamma=\alpha}.$$
The semi-positivity of $a(\alpha)$ will be proved in Lemma \ref{T}. Denote the first exit time of $x_t^{\alpha,x}$ from the domain $D$ by $\tau^{\alpha,x}$, i.e.
$$\tau^{\alpha,x}:=\inf\{t\ge0:x_t^{\alpha,x}\notin D\}.$$

Then we define a real value function $h_m=h_m(\alpha)$ on $\Gamma$ by letting
\begin{equation}\label{balpha}
h_m(\alpha):=1\Big/(H_m^{1/m})_{(I)}(\alpha)=mH^{-1/m}_m(I)\Bigg[\frac{s_m^{1/m}(\alpha)}{s^{1/(m-1)}_{m-1}(\alpha)}\Bigg]^{m-1},
\end{equation}
where $s_m(\alpha)$ and $s_{m-1}(\alpha)$ are the $m$-th and $(m-1)$-th elementary symmetric polynomials of the eigenvalues of $H_m(\alpha)$, respectively. We also define on $\Gamma\times\bar D$
\begin{equation}
f^\alpha(x)=h_m(\alpha)f(x).
\end{equation}

Now we are in a position to state our main results.

\begin{theorem}\label{drhp}
For each $x\in\bar D$, let
\begin{equation}\label{prma}
v(x)=\inf_{\alpha\in\mathfrak A}E\bigg[\varphi\big(x^{\alpha,x}_{\tau^{\alpha,x}}\big)-\int_0^{\tau^{\alpha,x}}f^{\alpha_t}\big(x_t^{\alpha,x}\big)dt\bigg].
\end{equation}
Then $v$ is Borel measurable and bounded, and for all $x\in D$,
\begin{equation}\label{0ma}
|v|\le|\varphi|_{0,\partial D}+mH_m^{-1/m}(I)|f|_{0,D}\psi.
\end{equation}

If $f,\varphi\in C^{0,1}(\bar D)$, then $v\in C^{0,1}_{loc}(D)\cap C(\bar D)$, and for a.e. $x\in D$,
\begin{equation}\label{firstma}
|v_{(\xi)}|\le N\bigg(|\xi|+\frac{|\px|}{\psi^{1/2}}\bigg),\ \forall\xi\in \Rd,
\end{equation}
where the constant $N=N(|f|_{0,1,D},|\varphi|_{0,1,D},|\psi|_{3,D},d)$.

If $f\in C^{0,1}(\bar D)$, $\varphi\in C^{1,1}(\bar{D})$ and $f$ is $K$-quasi-convex on $\bar D$, then $v\in C^{1,1}_{loc}(D)\cap C^{0,1}(\bar D)$, and for a.e. $x\in D$, 
\begin{equation}\label{secondma}
-N\frac{|\xi|^2}{\psi}\le v_{(\xi)(\xi)}\le N\bigg(|\xi|^2+\pxsop\bigg),\ \forall\xi\in \Rd,
\end{equation}
where the constant $N=N(|f|_{0,1,D},|\varphi|_{1,1,D}, |\psi|_{3,D},K, d)$. Meanwhile, $v$ is the unique solution in $C^{1,1}_{loc}(D)\cap C^{0,1}(\bar D)$ of the Dirichlet problem for the real Hessian equation:
\begin{equation}\label{realma}
\left\{\begin{array}{rcll}
H_m(u_{xx})&=&f^m  &\text{a.e. in }D\\ 
u_{xx}&\in&\overline \Gamma&\text{a.e. in }D\\
u&=&\varphi  &\text{on }\partial D.
\end{array}
\right.
\end{equation}
\end{theorem}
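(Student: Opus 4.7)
The plan is to realize $v$ as the stochastic value function of a normalized constant-coefficient degenerate Bellman equation equivalent to (\ref{hyphess}), and then apply the interior first- and second-derivative estimates established in \cite{InteriorRegularityII} together with the standard comparison principle.

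First I would derive the Bellman reformulation. Because $H_m^{1/m}$ is concave, $1$-homogeneous, and positive on $\Gamma$ by Property \ref{ppt2}, the tangent-plane bound combined with Euler's identity gives
\[
H_m^{1/m}(M)=\inf_{\alpha\in\Gamma}\bigl\langle D(H_m^{1/m})(\alpha),M\bigr\rangle=\inf_{\alpha\in\Gamma}\frac{a^{ij}(\alpha)\,M^{ij}}{h_m(\alpha)},
\]
with $a$ and $h_m$ as in (\ref{aalpha})--(\ref{balpha}). To make sense of this I first need $a(\alpha)\in\overline{\mathbb S^d_+}$, the announced Lemma \ref{T}; this is a direct consequence of Assumption \ref{ellip}, since $\xi\xi^*\in\overline{\mathbb S^d_+}\subset\overline\Gamma$ implies $\xi^*DH_m(\alpha)\xi=(H_m)_{(\xi\xi^*)}(\alpha)\ge0$ for every $\alpha\in\Gamma$ via the monotonicity in Property \ref{ppt3}. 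With $a$ positive semidefinite, the admissibility condition $u_{xx}\in\overline\Gamma$ together with $H_m(u_{xx})=f^m$ becomes, for such $u$,
\[
\inf_{\alpha\in\Gamma}\bigl\{a^{ij}(\alpha)u_{x^ix^j}-h_m(\alpha)f\bigr\}=0\text{ in }D,\qquad u=\varphi\text{ on }\partial D,
\]
and (\ref{prma}) is the standard stochastic representation of its (normalized) value function.

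For the $L^\infty$ bound (\ref{0ma}), I would argue through the global barrier $\psi$. It\^o's formula applied to $\psi$ along $x_t^{\alpha,x}$ and the cone condition (\ref{zai}) (which, together with $\operatorname{tr}[a(\alpha)M]\ge0$ for $M\in\overline\Gamma$, yields $L^\alpha\psi\le-c_0<0$ uniformly in $\alpha$) shows $E\,\tau^{\alpha,x}\le\psi(x)/c_0$. Combined with the pointwise bound $h_m(\alpha)\le mH_m^{-1/m}(I)$ obtained from Maclaurin's inequality on the $m$ $H_m$-eigenvalues of $\alpha$, this controls the running-cost term by $mH_m^{-1/m}(I)|f|_{0,D}\psi(x)$ and gives (\ref{0ma}) after comparison with the explicit sub/supersolutions $\pm(|\varphi|_{0,\partial D}+mH_m^{-1/m}(I)|f|_{0,D}\psi)$.

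The interior regularity (\ref{firstma}) and (\ref{secondma}) I would obtain as direct applications of the $C^{0,1}$- and $C^{1,1}$-interior estimates in \cite{InteriorRegularityII} for normalized degenerate Bellman equations. Those results are stated precisely in terms of the global barrier $\psi$ and its derivatives, under the hypotheses that the family $\{a(\alpha)\}$ be uniformly bounded (here $\operatorname{tr}a(\alpha)\equiv1$) and $\mathbb O^d$-invariant (inherited from the symmetry assumption on $H_m$), that $h_m$ be uniformly bounded (the Maclaurin bound), and that the data be Lipschitz, resp.\ $\varphi\in C^{1,1}$ with $f$ quasi-convex. The main obstacle, and the reason the Hessian-symmetry assumption is indispensable, is that $a(\alpha)$ genuinely degenerates as $\alpha\to\partial\Gamma$; the rotation argument from \cite{MR1211724} exploits the $\mathbb O^d$-symmetry to cancel the boundary-blowup terms that arise in the Bernstein-type second-derivative estimate. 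Once $v\in C^{1,1}_{loc}(D)\cap C^{0,1}(\bar D)$, dynamic programming guarantees that $v$ satisfies the Bellman equation a.e.\ in $D$, which via the equivalence above yields (\ref{realma}); uniqueness within this class follows from the comparison principle for Bellman equations applied to the normalized form.
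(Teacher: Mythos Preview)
Your proposal is correct and follows essentially the same route as the paper: rewrite (\ref{hyphess}) as a normalized constant-coefficient Bellman equation via the concavity and $1$-homogeneity of $H_m^{1/m}$ (the content of Lemma \ref{equiveqn}), verify the barrier property of $\psi$ (Lemma \ref{psi}), bound $h_m$ by Maclaurin's inequality, and then invoke the interior estimates from the author's prior work. The only quibble is bibliographic: the theorems you need (the $C^0$, $C^{0,1}$, and $C^{1,1}$ estimates together with the weak non-degeneracy and $\mathbb O^d$-invariance hypotheses) are Theorems 2.1--2.3 of \cite{InteriorRegularityI}, not \cite{InteriorRegularityII}; the paper also explicitly checks the weak non-degeneracy condition $\mu\ge 1/d>0$ by testing against $a^{\alpha_0}=(1/d)I$, which you should mention since it is one of the standing hypotheses of those theorems.
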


\begin{remark}\label{viscosity}
The readers interested in the notion of viscosity solutions understand that the probabilistic solution $u$ defined by (\ref{prma}) is the unique viscosity solution to the Dirichlet problem for the real Hessian equation (\ref{hyphess}) when it is continuous. Besides providing a stochastic representation of the unique viscosity solution, the other point of Theorem \ref{drhp} is establishing that the viscosity solution is locally Lipshitz when $f$ and $\varphi$ are Lipschitz, and it is locally $C^{1,1}$ and globally Lipschitz when $f$ and $\varphi$ are $C^{1,1}$. It is also worth mentioning that, in general, in order that $u$ is globally $C^{0,1}$ (resp. $C^{1,1}$), it is necessary that $\varphi\in C^{1,1}$ (resp. $C^{3,1}$). 
\end{remark}

\begin{remark}\label{gammasubh}
Although it is not our main focus in this paper, we note that when $u\in C_{loc}^{1,1}(D)$, by Corollary 7.5, \cite{MR2487853}, $u_{xx}\in \overline\Gamma\mbox{ a.e. in }D$ if and only if $u$ is of type of $\overline \Gamma$ (i.e. $\overline\Gamma$-subharmonic) in $D$ in the sense of Definition 4.4, \cite{MR2487853}. For example, an upper semicontinuous function $u$ is of type $\overline {\mathbb S_+^d}$ or $\overline {\mathbb S_+^d}$-subharmonic in $D$ if and only if $u$ is the usual convex function or $u\equiv-\infty$ in $D$. Therefore, for the Dirichlet problem for the real Monge-Amp\`ere equation (i.e. $H_d=\det$, $\overline\Gamma=\overline{ \mathbb S_+^d}$), when $u$ is the $C^{1,1}_{loc}(D)\cap C^{0,1}(\bar D)$-solution, the satisfaction of the relation $u_{xx}\in \overline {\mathbb S_+^d}\mbox{ a.e. in }D$ is equivalent to the convexity of $u$ in $D$.
\end{remark}

\subsection{Optimality on regularity}\label{optimality}

The following two examples show that our regularity result in Theorem \ref{drhp} is also optimal  in two senses. The first example shows that the $C^{1,1}$-regularity of the solution cannot be improved in general, even if the boundary data is smooth. The second example indicates that the $C^{1,1}$-regularity on the boundary data is necessary to obtain the interior $C^{1,1}$-regularity of the solution. 

\begin{example} [Optimality of the $C^{1,1}$-regularity, \cite{MR1687172}] Consider the unit ball $B_1(0)$ in $\mathbb R^2$ as the domain $D$ and
$$u(x_1,x_2)=\big[\max\{(x_1^2-1/2)^+,(x_2^2-1/2)^+\}\big]^2.$$
It is not hard to see that $u$ is convex and satisfies the homogeneous Monge-Amp\`ere equation with vanishing boundary data. We have $u\in C^\infty(\partial D)$, $u\in C^{1,1}(D)$, but $u\notin C^2(D)$.
\end{example}

\begin{example} [Necessity of the $C^{1,1}$-regularity of the boundary data] Still consider the unit ball $B_1(0)$ in $\mathbb R^2$ as the domain $D$. Let
$$u(x_1,x_2)=|x_1|^{2-\epsilon},$$
where $\epsilon$ is a small positive. It is not hard to see that $u$ is convex and satisfies the homogeneous Monge-Amp\`ere equation a.e. in $D$, $u\in C^{1,1-\epsilon}(\partial D)$ and $u\notin C^{1,1}(D)$.

\end{example}

\subsection{Proof of Theorem {\ref{drhp}}}

We prove Theorem \ref{drhp} by first rewriting the Hessian equation and the second relation in (\ref{hyphess}) as a  Bellman equation and then apply the regularity results established in \cite{InteriorRegularityI}. This approach was discussed generally in \cite{MR1284912}, Section 4.

The following two lemmas has been mostly established in \cite{MR2254601} when the hyperbolic polynomial $H_m$ is the $m$-th elementary symmetric polynomial of the usual eigenvalues of the matrix. See Lemmas 4.3 and 4.5 in \cite{MR2254601}. We extend them to any homogeneous hyperbolic polynomial.

\begin{lemma}\label{T}
Define $T(\alpha)=\big[T^{ij}(\alpha)\big]_{d\times d}$, where $T^{ij}(\alpha)=(H_m)_{\gamma^{ij}}(\alpha)$. Then for each $\alpha\in\Gamma$, we have 
\begin{enumerate}
\item$T(\alpha)\in  \overline{\mathbb S^d_+}$;
\item $T(O\alpha O^*)=OT(\alpha)O^*$, $\forall O\in \mathbb O^d$.
\item $\operatorname{tr}T(\alpha)=H_m(I)s_{m-1}(\alpha)>0$.
\end{enumerate}
\end{lemma}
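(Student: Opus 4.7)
The plan is to identify $T(\alpha)$ as the gradient of $H_m$ in the symmetric-matrix sense, so that $(H_m)_{(B)}(\alpha)=T^{ij}(\alpha)B_{ij}=\operatorname{tr}(T(\alpha)B)$ for every symmetric matrix $B$; in particular $T(\alpha)$ is automatically symmetric. Once this identification is made, each of the three claims reduces to a short computation based on one of Properties \ref{ppt1}--\ref{ppt4} or the Hessian hypothesis.

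For (1), observe that $T(\alpha)\xi\cdot\xi = (H_m)_{(\xi\xi^*)}(\alpha)$. Since $\mathbb S^d_+\subset\Gamma$ by Assumption \ref{ellip}, the strictly positive definite matrix $\xi\xi^*+\epsilon I$ lies in $\Gamma$ for every $\epsilon>0$; Property \ref{ppt3} then yields that $t\mapsto H_m(\alpha+t(\xi\xi^*+\epsilon I))$ is strictly increasing on $t\ge 0$, so its derivative at $t=0$ is nonnegative. Letting $\epsilon\to 0^+$ and using the linearity of the directional derivative in its direction gives $(H_m)_{(\xi\xi^*)}(\alpha)\ge 0$. For (2), the Hessian hypothesis $H_m(O\cdot O^*)=H_m(\cdot)$ and the chain rule give
$$\operatorname{tr}\bigl(T(O\alpha O^*)B\bigr)=(H_m)_{(B)}(O\alpha O^*)=(H_m)_{(O^*BO)}(\alpha)=\operatorname{tr}\bigl(OT(\alpha)O^*B\bigr)$$
for every symmetric $B$, which forces $T(O\alpha O^*)=OT(\alpha)O^*$. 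For (3), hyperbolicity of $H_m$ with respect to $I$ gives $H_m(\alpha+tI)=H_m(I)\prod_k(t+\lambda_k(\alpha))$; differentiating at $t=0$ yields
$$\operatorname{tr}T(\alpha)=(H_m)_{(I)}(\alpha)=H_m(I)\sum_j\prod_{k\ne j}\lambda_k(\alpha)=H_m(I)\,s_{m-1}(\alpha),$$
and both factors are positive since $H_m(I)>0$ by the definition of hyperbolicity and $\lambda_k(\alpha)>0$ for every $k$ when $\alpha\in\Gamma$.

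The only step with any subtlety is the passage from strictly positive definite to rank-one semidefinite directions in (1), which is handled cleanly by the $\epsilon I$ perturbation together with strict monotonicity; everything else is a direct bookkeeping exercise using tools already collected in the paper.
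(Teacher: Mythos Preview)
Your proof is correct and follows essentially the same route as the paper's: identify $T(\alpha)$ with the gradient so that $\operatorname{tr}(T(\alpha)B)=(H_m)_{(B)}(\alpha)$, perturb $\xi\xi^*$ by $\epsilon I$ to land in $\Gamma$ for (1), use the $\mathbb O^d$-invariance of $H_m$ for (2), and compute $(H_m)_{(I)}(\alpha)$ for (3). The only cosmetic differences are that for (1) you invoke Property~\ref{ppt3} (strict monotonicity) where the paper invokes Property~\ref{ppt4} (positivity of $(H_m)_{(\beta)}$ on $\Gamma$), and for (3) you argue $s_{m-1}(\alpha)>0$ directly from $\lambda_k(\alpha)>0$ while the paper cites that $\alpha$ lies in the G{\aa}rding cone of $(H_m)_{(I)}$; both pairs of arguments are interchangeable.
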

\begin{proof}
For each  non-zero vector $\zeta\in\Rd$,
$$\zeta^*T(\alpha)\zeta=\operatorname{tr}\big[T(\alpha)\zeta\zeta^*\big]=\frac{d}{dt}H_m(\alpha+t\zeta\zeta^*)\Big|_{t=0}
=(H_m)_{(\zeta\zeta^*)}(\alpha).$$
Since $\zeta\zeta^*\in \overline{\mathbb S^d_+}\subset\overline\Gamma$, by Property \ref{ppt4}, for $\beta=\zeta
\zeta^*+\epsilon I\in\Gamma$, where $\epsilon>0$, we have
$$(H_m)_{(\beta)}(\alpha)>0
.$$
By letting $\epsilon\rightarrow0$, we have
$$\zeta^*T(\alpha)\zeta\ge 0,$$
which proves (1).

To prove (2), first note that both $H_m$ and $\Gamma$ are stable under the action of the orthogonal group. Then for any $M\in\mathbb S^d$ and $O\in\mathbb O^d$, 
\begin{align}\label{repeat}
\operatorname{tr}\big[T(O\alpha O^*)M\big]=&\frac{d}{dt}H_m(O\alpha O^*+tM)\Big|_{t=0}\\
=&\frac{d}{dt}H_m(\alpha +tO^*MO)\Big|_{t=0}\nonumber\\
=&\operatorname{tr}\big[T(\alpha)O^*MO\big]\nonumber\\
=&\operatorname{tr}\big[OT(\alpha)O^*M\big].\nonumber
\end{align}

To show (3) it suffices to notice that both sides of the equation are equal to $(H_m)_{(I)}(\alpha)$. Since $\alpha\in \Gamma$, $\alpha$ is also in the G{\aa}rding cone of $(H_m)_{(I)}$. As a result, $(H_m)_{(I)}(\alpha)>0.$
\end{proof}

\begin{lemma}\label{equiveqn}
For any $c\ge0$, the system
\begin{equation}\label{equivhyp}
\left\{\begin{array}{rcl}
H_m(\gamma)&=&c^m\\
\gamma&\in&\overline\Gamma
\end{array}
\right.
\end{equation}
is equivalent to the  Bellman equation
\begin{equation}\label{equivbell}
\inf_{\alpha\in\Gamma}\big\{a^{ij}(\alpha)\gamma_{ij}-h_m(\alpha)c\big\}=0,
\end{equation}
where $a(\alpha)$ and $h_m(\alpha)$ are given by (\ref{aalpha}) and (\ref{balpha}), respectively.
\end{lemma}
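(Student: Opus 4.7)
The plan is to read the lemma as the Legendre--Fenchel dual representation of the concave, $1$-homogeneous function $H_m^{1/m}\colon\overline\Gamma\to[0,\infty)$, normalized by its directional derivative in the direction of the identity so that the Bellman coefficients $a(\alpha)$ and $h_m(\alpha)$ appear. The whole proof reduces to one supporting-hyperplane inequality plus a small approximation step at $\partial\Gamma$. By the concavity of $H_m^{1/m}$ on the convex cone $\overline\Gamma$ (Property \ref{ppt2}) together with Euler's identity $(H_m^{1/m})_{(\alpha)}(\alpha)=H_m^{1/m}(\alpha)$, the $1$-homogeneity collapses the affine supporting inequality at $\alpha\in\Gamma$ to
\[
H_m^{1/m}(\gamma)\ \le\ (H_m^{1/m})_{(\gamma)}(\alpha),\qquad\forall\,\gamma\in\overline\Gamma,\ \alpha\in\Gamma,
\]
with equality when $\alpha=\gamma\in\Gamma$.

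I would next unpack this using the chain rule: for any $\alpha\in\Gamma$,
\[
(H_m^{1/m})_{(\gamma)}(\alpha)=\tfrac{1}{m}H_m^{1/m-1}(\alpha)\,T^{ij}(\alpha)\gamma_{ij},\qquad
(H_m^{1/m})_{(I)}(\alpha)=\tfrac{1}{m}H_m^{1/m-1}(\alpha)\operatorname{tr}T(\alpha)>0,
\]
the latter being strictly positive by Lemma \ref{T}(3). Dividing the supporting inequality through by the positive factor $(H_m^{1/m})_{(I)}(\alpha)$ and recalling the definitions (\ref{aalpha}) and (\ref{balpha}) converts it to the clean Bellman-type inequality
\[
a^{ij}(\alpha)\gamma_{ij}-h_m(\alpha)H_m^{1/m}(\gamma)\ \ge\ 0,\qquad\forall\,\alpha\in\Gamma,\ \gamma\in\overline\Gamma,
\]
with equality at $\alpha=\gamma$ when $\gamma\in\Gamma$, since Euler's identity yields $T^{ij}(\gamma)\gamma_{ij}=mH_m(\gamma)$ and hence $a^{ij}(\gamma)\gamma_{ij}=mH_m(\gamma)/\operatorname{tr}T(\gamma)=h_m(\gamma)H_m^{1/m}(\gamma)$. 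The equivalence is then immediate for $\gamma\in\Gamma$: plugging $c=H_m^{1/m}(\gamma)$ and $\alpha=\gamma$ into the equality case yields the Bellman equation (\ref{equivbell}); conversely, given a Bellman solution, testing $\alpha=\gamma$ together with the global lower bound $a^{ij}(\alpha)\gamma_{ij}-h_m(\alpha)c\ge h_m(\alpha)(H_m^{1/m}(\gamma)-c)$ pins down $c=H_m^{1/m}(\gamma)$, whence $H_m(\gamma)=c^m$.

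The main obstacle is the boundary case $\gamma\in\partial\Gamma$, where $\alpha=\gamma\notin\Gamma$ is inadmissible. Here I would use $H_m|_{\partial\Gamma}=0$ (Property \ref{ppt2}), which forces $c=0$ in (\ref{equivhyp}), and approximate by $\alpha_\epsilon=\gamma+\epsilon I\in\Gamma$; the Euler decomposition $(H_m)_{(\gamma)}(\alpha_\epsilon)=mH_m(\alpha_\epsilon)-\epsilon\operatorname{tr}T(\alpha_\epsilon)$ then shows $a^{ij}(\alpha_\epsilon)\gamma_{ij}\to0$ as $\epsilon\downarrow0$, so the Bellman infimum still vanishes. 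A secondary subtlety, required for the converse to enforce the admissibility constraint $\gamma\in\overline\Gamma$, is that whenever $\gamma\notin\overline\Gamma$, Property \ref{ppt4} (the hyperbolicity of directional derivatives together with the inclusion $\Gamma_{H_m}\subset\Gamma_{G_{m-1}}$) produces some $\alpha\in\Gamma$ with $T^{ij}(\alpha)\gamma_{ij}<0$, driving the Bellman infimum strictly negative and ruling out solutions outside $\overline\Gamma$.
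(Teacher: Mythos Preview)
Your forward direction and the $\partial\Gamma$ approximation via $\alpha_\epsilon=\gamma+\epsilon I$ are correct and coincide with the paper's argument. The converse, however, has a genuine gap. From the Bellman equation you correctly extract $H_m^{1/m}(\gamma)\ge c$ by testing $\alpha=\gamma$, but the ``global lower bound''
\[
a^{ij}(\alpha)\gamma_{ij}-h_m(\alpha)c\ \ge\ h_m(\alpha)\bigl(H_m^{1/m}(\gamma)-c\bigr)
\]
does \emph{not} exclude the case $c<H_m^{1/m}(\gamma)$: since $h_m(\alpha)=1/(H_m^{1/m})_{(I)}(\alpha)$ and $(H_m^{1/m})_{(I)}(\alpha)\to\infty$ as $\alpha\to\partial\Gamma$, one has $\inf_{\alpha\in\Gamma}h_m(\alpha)=0$, so the right-hand side has infimum $0$ and your lower bound is consistent with the Bellman infimum vanishing even when $H_m^{1/m}(\gamma)>c$. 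The paper closes this gap by a translation device that you are missing: pick $t_1$ with $t_1 I+\gamma\in\overline\Gamma$ and $H_m(t_1 I+\gamma)=c^m$, apply the already-established forward implication to $t_1 I+\gamma$, and then use the normalization $a^{ij}(\alpha)\delta_{ij}=\operatorname{tr}a(\alpha)\equiv1$ to get
\[
\inf_{\alpha\in\Gamma}\bigl\{a^{ij}(\alpha)\gamma_{ij}-h_m(\alpha)c\bigr\}=-t_1,
\]
forcing $t_1=0$. The identity $\operatorname{tr}a\equiv1$ is precisely the structural fact that turns the Bellman infimum into an additive function of the $I$-translate, and it is the missing ingredient in your argument.

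A smaller issue: your appeal to Property~\ref{ppt4} for $\gamma\notin\overline\Gamma$ is not quite right as stated. Property~\ref{ppt4} yields $T^{ij}(\alpha)B_{ij}>0$ for $B,\alpha\in\Gamma$, which is positivity in the wrong direction; it does not by itself produce $\alpha$ with $T^{ij}(\alpha)\gamma_{ij}<0$. One can make this work directly by taking $\alpha=\gamma+tI$ with $t\downarrow-\lambda_1(\gamma)$ and computing $T^{ij}(\alpha)\gamma_{ij}=mH_m(\alpha)-t(H_m)_{(I)}(\alpha)\to-\infty$, or, as the paper does, by invoking the characterization $\overline\Gamma=\{\gamma:a^{ij}(\alpha)\gamma_{ij}\ge0\ \forall\alpha\in\Gamma\}$ that follows from the concave-homogeneous representation (\ref{bellhomo}).
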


\begin{proof}[Proof of (\ref{equivhyp})$\Rightarrow$(\ref{equivbell})]

Since $H_m^{1/m}$ is concave in $\Gamma$, for each $\gamma\in\Gamma$, we have
\begin{equation*}
H_m^{1/m}(\gamma)=\inf_{\alpha\in\Gamma}\Big\{(H_m^{1/m})_{\gamma^{ij}}(\alpha)(\gamma_{ij}-\alpha_{ij})+H_m^{1/m}(\alpha)\Big\}.
\end{equation*}
By Euler's homogeneous function theorem, we have
$$(H_m^{1/m})_{\gamma^{ij}}(\alpha)\alpha_{ij}=H_m^{1/m}(\alpha).$$
Therefore
\begin{equation}\label{bellhomo}
H_m^{1/m}(\gamma)=\inf_{\alpha\in\Gamma}\big\{(H_m^{1/m})_{\gamma^{ij}}(\alpha)\gamma_{ij}\big\}.
\end{equation}
It follows that on $\Gamma$, the equation $H_m(\gamma)=c^m$ can be rewritten as
$$\inf_{\alpha\in\Gamma}\big\{m^{-1}H_m^{1/m-1}(\alpha)(H_m)_{\gamma^{ij}}(\alpha)\gamma_{ij}-c\big\}=0.$$

Since for each $\gamma\in\Gamma$, the infimum of the expression in the bracket is attained at $\alpha=\gamma$, we obtain
$$(H_m)_{\gamma^{ij}}(\gamma)\gamma_{ij}=cmH_m^{1-1/m}(\gamma).$$
By Lemma \ref{T} (3), for each $\gamma\in\Gamma$, we can divide both sides by $\operatorname{tr}\big[(H_m)_{\gamma^{ij}}(\gamma)\big]$,  so
\begin{align*}
a^{ij}(\gamma)\gamma_{ij}=&cm\frac{H_m^{1-1/m}(\gamma)}{\operatorname{tr}\big[(H_m)_{\gamma^{ij}}(\gamma)\big]}\\
=&cm\frac{\big[H_m(I)s_m(\gamma)\big]^{1-1/m}}{H_m(I)s_{m-1}(\gamma)}\\
=&ch_m(\gamma).
\end{align*}
Therefore for each $\gamma\in\Gamma$, Equation (\ref{equivbell}) is true since again the infimum there is attained at $\alpha=\gamma$.

If $\gamma\in\partial\Gamma$, then $c=0$. For each $\epsilon>0$, we have $\gamma+\epsilon I\in \Gamma$. Therefore,
\begin{align*}
a^{ij}(\gamma+\epsilon I)\gamma_{ij}=&\frac{(H_m)_{\gamma^{ij}}(\gamma+\epsilon I)\gamma_{ij}}{\operatorname{tr}\big[(H_m)_{\gamma^{ij}}(\gamma+\epsilon I)\big]}\\
=&\frac{(H_m)_{\gamma^{ij}}(\gamma+\epsilon I)(\gamma_{ij}+\epsilon\delta_{ij}-\epsilon\delta_{ij})}{\operatorname{tr}\big[(H_m)_{\gamma^{ij}}(\gamma+\epsilon I)\big]}\\
=&m\frac{H_m(\gamma+\epsilon I)}{\operatorname{tr}\big[(H_m)_{\gamma^{ij}}(\gamma+\epsilon I)\big]}-\epsilon.
\end{align*}
Since $p(\epsilon)=H_m(\gamma+\epsilon I)$ and $q(\epsilon)=\operatorname{tr}\big[(H_m)_{\gamma^{ij}}(\gamma+\epsilon I)\big]$ are $m$-th degree and $(m-1)$-th degree polynomials, we have
$$\lim_{\epsilon\rightarrow0}a^{ij}(\gamma+\epsilon I)\gamma_{ij}=0,$$
which implies that Equation (\ref{equivbell}) is true.
\end{proof}

\begin{proof}[Proof of (\ref{equivbell})$\Rightarrow$(\ref{equivhyp})]
 
We first establish the relation $\gamma\in\overline\Gamma$. By (\ref{bellhomo}),
\begin{align*}
\overline\Gamma=&\{\gamma\in\mathbb S^d: (H_m^{1/m})_{\gamma^{ij}}(\alpha)\gamma_{ij}\ge0,\forall \alpha\in\Gamma\}\\
=&\{\gamma\in\mathbb S^d: a^{ij}(\alpha)\gamma_{ij}\ge0,\forall \alpha\in\Gamma\}.
\end{align*}
On the other hand, since $c\ge0$, $h_m(\alpha)>0$ on $\Gamma$, given any $\gamma$ satisfying (\ref{equivbell}), we have
$$a^{ij}(\alpha)\gamma_{ij}\ge0, \quad\forall\alpha\in\Gamma.$$
Thus the relation is established.

Then we verify the equation $H_m(\gamma)=c$. Given any $\gamma\in\overline\Gamma$, we notice that 
$$\lim_{t\rightarrow+\infty}H_m(tI+\gamma)=\lim_{t\rightarrow+\infty}t^mH_m(I+t^{-1}\gamma)=+\infty,$$
and that there exists $t_0\le0$, such that $t_0I+\gamma\in\partial\Gamma$ and $H_m(t_0I+\gamma)=0$. By the continuity of $p(t)=H_m(tI+\gamma)$, for each $c\ge0$, there exists a scalar $t_1\in [t_0,\infty)$, such that $t_1I+\gamma\in\overline\Gamma$ and $H_m(t_1I+\gamma)=c^m$.

From the proof of (\ref{equivhyp})$\Rightarrow$(\ref{equivbell}), we know that
\begin{align*}
\inf_{\alpha\in\Gamma}\Big\{a^{ij}(\alpha)(t_1\delta_{ij}+\gamma_{ij})-h(\alpha)c\Big\}=&0\\
\inf_{\alpha\in\Gamma}\Big\{a^{ij}(\alpha)\gamma_{ij}-h(\alpha)c\Big\}=&-t_1.
\end{align*}
Comparison with (\ref{equivbell}) gives $t_1=0$, so Equation (\ref{equivhyp}) holds.
\end{proof}


We next study the properties of the global barrier $\psi$ given in Definition \ref{globalbarrier}.

\begin{lemma}\label{psi}
For sufficiently large constant $C$, the global barrier $\psi$ given in Definition \ref{globalbarrier} satisfies the following conditions:
\begin{enumerate}
\item $D=\{\psi>0\}$;
\item $|\psi_x|\ge 1$ on $\partial D$;
\item $\displaystyle\sup_{\alpha\in\Gamma}\big(a_{ij}(\alpha)\psi_{x^ix^j}\big)\le-1$ in $\bar D$.
\end{enumerate}
\end{lemma}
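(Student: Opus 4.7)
The plan is to show that all three properties follow once $C$ in the definition $\psi = -C\rho$ is chosen sufficiently large. Properties (1) and (2) are immediate consequences of the corresponding properties of $\rho$ listed in Definition \ref{globalbarrier}, while (3) is the real content and relies on the dual characterization of $\overline\Gamma$ that implicitly appeared in the proof of Lemma \ref{equiveqn}.

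For (1), since $C>0$ and $\psi=-C\rho$, the sign conditions $\psi>0$ and $\rho<0$ are equivalent, so $D=\{\psi>0\}$. For (2), $\partial D$ is compact and $\rho_x$ is continuous and non-vanishing on it, so there exists $c_0>0$ with $|\rho_x|\ge c_0$ on $\partial D$; then $|\psi_x|=C|\rho_x|\ge Cc_0\ge 1$ provided $C\ge 1/c_0$.

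For (3), the key identity, extracted from the proof of Lemma \ref{equiveqn}, is
\[
\overline\Gamma=\bigl\{\gamma\in\mathbb S^d:\ a^{ij}(\alpha)\gamma_{ij}\ge 0,\ \forall\alpha\in\Gamma\bigr\},
\]
which follows from the representation $H_m^{1/m}(\gamma)=\inf_{\alpha\in\Gamma}a^{ij}(\alpha)\gamma_{ij}\cdot mH_m^{1/m-1}(\alpha)$ (times a positive factor) combined with $H_m^{1/m}|_{\overline\Gamma}\ge 0$. By the defining property of $\rho$ in Definition \ref{globalbarrier}, for $C\ge R$ we have
\[
C\bigl(\rho_{xx}-\epsilon I\bigr)\in\overline\Gamma,
\]
so testing against any $\alpha\in\Gamma$ and using $\operatorname{tr}a(\alpha)=1$ (by definition of $a$ in \eqref{aalpha}), we obtain
\[
Ca^{ij}(\alpha)\rho_{x^ix^j}\ge C\epsilon\,\operatorname{tr}a(\alpha)=C\epsilon.
\]
Therefore
\[
a^{ij}(\alpha)\psi_{x^ix^j}=-Ca^{ij}(\alpha)\rho_{x^ix^j}\le -C\epsilon\le -1,
\]
as long as $C\ge 1/\epsilon$. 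Taking the supremum over $\alpha\in\Gamma$ yields (3).

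Altogether, selecting $C\ge\max(R,\,1/c_0,\,1/\epsilon)$ simultaneously secures (1), (2), and (3). No step presents a serious obstacle; the only conceptual point is to notice that the dual description of $\overline\Gamma$ via the coefficients $a^{ij}(\alpha)$ is exactly what converts the $\overline\Gamma$-membership guaranteed by Definition \ref{globalbarrier} into the uniform upper bound on $a^{ij}(\alpha)\psi_{x^ix^j}$ demanded by the lemma.
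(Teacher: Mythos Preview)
Your proof is correct and follows essentially the same route as the paper's: both deduce (3) from the membership $C(\rho_{xx}-\epsilon I)\in\overline\Gamma$ guaranteed by \eqref{zai}, combined with the dual description $\overline\Gamma=\{\gamma:a^{ij}(\alpha)\gamma_{ij}\ge0\ \forall\alpha\in\Gamma\}$ and the normalization $\operatorname{tr}a(\alpha)=1$. The paper phrases this step as an application of Lemma~\ref{equiveqn}, while you are more explicit in isolating the dual characterization from that lemma's proof; the arguments are otherwise identical.
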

\begin{proof}
Recall that $\psi=-C\rho$. The truthness of (1) and (2) are trivial. To prove (3) we notice that by (\ref{zai}),
$$H_m\Big(C(\rho-\frac{\epsilon}{2}|x|^2)\Big)\ge 0.$$
By Lemma \ref{equiveqn}, 
$$\inf_{\alpha\in\Gamma}\Big\{a^{ij}(\alpha)\Big(C(\rho-\frac{\epsilon}{2}|x|^2)\Big)_{x_ix_j}\Big\}\ge0.$$
It follows that
$$\inf_{\alpha\in\Gamma}\big\{a^{ij}(\alpha)(C\rho)_{x_ix_j}\big\}\ge C\epsilon,$$
which implies the truthness of (3) by letting $C$ be sufficiently large.

\end{proof}

Now we are ready to prove Theorem \ref{drhp}.

\begin{proof}[Proof of Theorem \ref{drhp}]
We apply Theorems 2.1, 2,2 and 2.3 in \cite{InteriorRegularityI} with
$$A=\Gamma,\qquad a^\alpha=\Bigg(\frac{(H_m)_{\gamma^{ij}}(\alpha)}{\operatorname{tr}\big[(H_m)_{\gamma^{ij}}(\alpha)\big]}\Bigg)_{1\le i,j\le d},$$
$$\sigma^\alpha=\sqrt{2a^\alpha},\qquad b^\alpha=0,\qquad c^\alpha=0,$$
$$f^\alpha(x)=mH^{-1/m}_m(I)\Bigg[\frac{s_m^{1/m}(\alpha)}{s^{1/(m-1)}_{m-1}(\alpha)}\Bigg]^{m-1}f(x),\qquad g(x)=-\varphi(x).$$

Based on these substitutions, it is not hard to see that $v$ defined by (\ref{prma}) equals $-v$ defined by (2.3) in \cite{InteriorRegularityI}.

By Maclaurin's inequality, 
$$\frac{s_m^{1/m}(\alpha)}{s^{1/(m-1)}_{m-1}(\alpha)}\le{\binom{m}{m-1}}^{-1/(m-1)}{\binom{m}{m}}^{1/m}=m^{-1/(m-1)},$$
which implies that
$$m\Bigg[\frac{s_m^{1/m}(\alpha)}{s^{1/(m-1)}_{m-1}(\alpha)}\Bigg]^{m-1}\le1.$$

By Lemma \ref{psi}, Assumption 2.1 in \cite{InteriorRegularityII} holds. Recall that in \cite{InteriorRegularityI}, we define $\mathbb A=\{a^\alpha:\alpha\in A\}$. By Lemma \ref{T}, we see that
$$\mathbb A\subset\{M\in\overline{\mathbb S_+^d}: \operatorname{tr} M=1\},$$
and for each $O\in\mathbb O^d$, we have
$$O\mathbb A O^*=\mathbb A$$
and
$$h(O\alpha O^*)=h(\alpha),$$
so Assumption 2.2 in \cite{InteriorRegularityI} holds.

To verify to weak non-degeneracy we consider $a^{\alpha_0}=(1/d)I_{d\times d}\in\mathbb A$, then we have
$$\mu=\inf_{|\zeta|=1}\sup_{\alpha\in A}(a^\alpha)_{ij}\zeta^i\zeta^j\ge\inf_{|\zeta|=1}(a^{\alpha_0})_{ij}\zeta^i\zeta^j=1/d>0.$$

Therefore we can apply Theorems 2.1, 2.2 and 2.3 in \cite{InteriorRegularityI} to obtain Theorem \ref{drhp}.

\end{proof}

\section{Complex Hessian equations defined by hyperbolic polynomials}\label{section3}

This section is the complex counterpart of the previous section. Refer to Subsection \ref{notation} for the notation we adopted in complex analysis. 

\subsection{Assumptions and theorems}
We consider the Dirichlet problem for the complex Hessian equation
\begin{equation}\label{chyphess}
\left\{\begin{array}{rcll}
G_m(u_{z\bar z})&=&f^m  &\text{in }D\\ 
u_{z\bar z}&\in&\overline\Theta&\text{in }D\\
u&=&\varphi  &\text{on }\partial D,
\end{array}
\right.
\end{equation}
where $D$ is a bounded smooth domain in $\mathbb C^d$ ($d\ge2$), $G_m: \mathbb H^d\rightarrow\mathbb R$ is an $m$-th degree homogeneous hyperbolic polynomial with respect to the identity matrix, $\Theta$ is the associated G{\aa}rding cone, and $f: D\rightarrow [0,\infty)$ and $\varphi: \partial D\rightarrow \mathbb R$ are continuous functions.


Note that $\mathbb H_d$ is a $d^2$-dimensional vector space over $\mathbb R$, all of the elementary properties of hyperbolic polynomials and G{\aa}rding cones mentioned in Section \ref{section2} are satisfied. See, e.g., \cite{MR3055586}. In order that the Dirichlet problem is elliptic, we impose the following positivity assumption on $\Theta$:

\begin{assumption}[Ellipticity]\label{cmono} Assume that
$$\mathbb H^d_+\subset\Theta.$$
\end{assumption}

We also suppose the partial differential equation in (\ref{chyphess}) is a Hessian equation, i.e.,
\begin{assumption}[Symmetry]
For any $U\in\mathbb U^d$, $M\in \mathbb H^d$,
\begin{equation}
G_m(M)=G_m(UM\bar U^*).
\end{equation}
\end{assumption}

In order that the Dirichlet problem (\ref{chyphess}) is solvable, our assumption on the geometry of the boundary of the domain is the following:

\begin{assumption}[Strict $\Theta$-pseudoconvexity of the domain]\label{chypbdry} For each $z\in \partial D$, there exists a sufficiently large scalar $t_0$, such that for all $t\ge t_0$, we have $\operatorname{L}_z+t(\operatorname P_{\vec n})_z\in \Theta$, where $\operatorname{L}_z$ is the Levi form of the defining function of $\partial D$ around $z$ and $(\operatorname P_{\vec n})_z$ is the orthogonal projection onto the complex line in the direction of the inward-pointing normal $\vec n$ at $z$.
\end{assumption}

If we take the $d$-dimensional hyperbolic polynomial $\det: \mathbb H_d\rightarrow \mathbb R$ as an example, then the partial differential equation in (\ref{chyphess}) is the classical complex Monge-Amp\`ere equation, the second relation in (\ref{chyphess}) means that we seek plurisubharmonic solutions. In this situation, Assumption \ref{cmono} holds since $\mathbb H_+^d=\Theta$, while Assumption \ref{chypbdry} means that we consider the domain which is strictly pseudo-convex. Our settings also cover some other interesting complex Hessian equations which will be discussed in subsequent sections.

It is also worth point out that, similarly to the real cases, the elliptic equation in (\ref{chyphess}) is degenerate if and only if
\begin{equation}
\partial \Theta\cap \overline{\mathbb H_+^d}\setminus\{0\}\ne \emptyset.
\end{equation}

To introduce the probabilistic solution, we let $W_t$ be the normalize complex Wiener process of dimension $d$, i.e. a $d$-dimensional stochastic process $W_t=(W_t^1,..., W_t^d)$ with values in $\mathbb C^d$ given by 
$$W_t^j=\frac{1}{\sqrt 2}\Big(w_t^{j,1}+iw_t^{j,2}\Big), \ t\ge0, \ 1\le j\le d,$$
where the processes $(w_t^{j,1},w_t^{j,2})_{1\le j\le d}$ are independent real Wiener processes. Let $\mathfrak B$ be the set of progressively-measurable processes $\beta=(\beta_t)_{t\ge0}$ with values in $\Theta$ for all $t\ge0$. Introduce a family of controlled diffusion processes
\begin{equation}\label{ccontdiff}
z_t^{\beta,z}=z+\int_0^t\sqrt{b(\beta_s)}dW_s,  \ \forall \beta\in\mathfrak B,
\end{equation}
where $b=b(\beta)$ is a $\overline{\mathbb H^d_+}$-valued function on $\Theta$, defined by
\begin{equation}\label{caalpha}
b^{ij}(\beta)=\frac{(G_m)_{\theta^{ij}}(\beta)}{\operatorname{tr}\big[(G_m)_{\theta^{ij}}(\beta)\big]}, \ 1\le i,j\le d.
\end{equation}
Then we define a real-valued function $l_m=l_m(\beta)$ on $\Theta$ by letting
\begin{equation}\label{cbalpha}
l_m(\alpha):=1\Big/(G_{m}^{1/m})_{(I)}(\beta)=mG^{-1/m}_m(I)\Bigg[\frac{r_m^{1/m}(\beta)}{r^{1/(m-1)}_{m-1}(\beta)}\Bigg]^{m-1},
\end{equation}
where $r_m(\beta)$ and $r_{m-1}(\beta)$ are the $m$-th and $(m-1)$-th elementary symmetric polynomials of the eigenvalues of $G_m(\beta)$, respectively. We also define
\begin{equation}
f^\beta(z)=l_m(\beta)f(z).
\end{equation}
Denote the first exit time of $z_t^{\beta,z}$ from the domain $D$ by $\tau^{\beta,z}$.

Under Assumption \ref{chypbdry}, there exists a global barrier $\psi$ satisfying the following conditions:
\begin{enumerate}
\item $D=\{\psi>0\}$;
\item $|\psi_z|\ge 1$ on $\partial D$;
\item $\displaystyle\sup_{\beta\in\Theta}\big(b_{ij}(\beta)\psi_{z^i\bar z^j}\big)\le-1$ in $\bar D$.
\end{enumerate} 

We can now state and prove our results.

\begin{theorem}\label{cdrhp}
For each $z\in\bar D$, let
\begin{equation}\label{cprma}
v(z)=\inf_{\beta\in\mathfrak B}E\bigg[\varphi\big(z^{\beta,z}_{\tau^{\beta,z}}\big)-\int_0^{\tau^{\beta,z}}f^{\beta_t}\big(z_t^{\beta,z}\big)dt\bigg].
\end{equation}
Then $v$ is Borel measurable and bounded, and for all $z\in D$,
\begin{equation}\label{c0ma}
|v|\le|\varphi|_{0,\partial D}+mG_m^{-1/m}(I)|f|_{0,D}\psi.
\end{equation}

If $f,\varphi\in C^{0,1}(\bar D)$, then $v\in C^{0,1}_{loc}(D)\cap C(\bar D)$, and for a.e. $z\in D$,
\begin{equation}\label{cfirstma}
|v_{(\xi)}|\le N\bigg(|\xi|+\frac{|\px|}{\psi^{1/2}}\bigg),\ \forall\xi\in \mathbb C^d,
\end{equation}
where the constant $N=N(|f|_{0,1,D},|\varphi|_{0,1,D},|\psi|_{3,D},d)$.

If $f, \varphi\in C^{0,1}(\bar D)$, then $v\in C^{1,1}_{loc}(D)\cap C^{0,1}(\bar D)$, and for a.e. $z\in D$, 
\begin{equation}\label{csecondma}
-N\frac{|\xi|^2}{\psi}\le v_{(\xi)(\xi)}\le N\bigg(|\xi|^2+\pxsop\bigg),\ \forall\xi\in \Rd,
\end{equation}
where the constant $N=N(|f|_{0,1,D},|\varphi|_{1,1,D}, |\psi|_{3,D}, d)$. Meanwhile, $v$ is the unique solution in $C^{1,1}_{loc}(D)\cap C^{0,1}(\bar D)$ of the Dirichlet problem for the degenerate complex Hessian equation:
\begin{equation}\label{complexma}
\left\{\begin{array}{rcll}
G_m(u_{z\bar z})&=&f^m  &\text{a.e. in }D\\ 
u_{z\bar z}&\in&\overline\Theta&\text{a.e. in }D\\
u&=&\varphi  &\text{on }\partial D.
\end{array}
\right.
\end{equation}
\end{theorem}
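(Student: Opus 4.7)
The plan is to mirror the proof of Theorem \ref{drhp} step by step, replacing orthogonal symmetry by unitary symmetry and real Wiener processes by the normalized complex Wiener process $W_t$. The backbone is to recast (\ref{chyphess}) as a constant-coefficient degenerate Bellman equation on $\mathbb{H}^d$ using G\aa rding's theory, and then invoke the complex analogues of Theorems 2.1--2.3 of \cite{InteriorRegularityI} established in \cite{InteriorRegularityII}.

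First I would prove the complex analogue of Lemma \ref{T}: for $\beta\in\Theta$, define $S(\beta):=\big[(G_m)_{\theta^{ij}}(\beta)\big]_{d\times d}\in\mathbb H^d$, and verify that (i) $S(\beta)\in\overline{\mathbb H_+^d}$, (ii) $S(U\beta\bar U^*)=US(\beta)\bar U^*$ for all $U\in\mathbb U^d$, and (iii) $\operatorname{tr}S(\beta)=G_m(I)r_{m-1}(\beta)>0$. The first point uses Property \ref{ppt4} applied to directional derivatives along $\zeta\bar\zeta^*\in\overline{\mathbb H_+^d}\subset\overline\Theta$; the second follows from $\mathbb U^d$-invariance of $G_m$ and $\Theta$, exactly as in (\ref{repeat}); the third is Euler's identity combined with the fact that $\beta$ lies in the G\aa rding cone of $(G_m)_{(I)}$.

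Next I would establish the complex version of Lemma \ref{equiveqn}: for each $c\ge0$, the system $G_m(\theta)=c^m$, $\theta\in\overline\Theta$ is equivalent to the Bellman equation
\begin{equation*}
\inf_{\beta\in\Theta}\bigl\{b^{ij}(\beta)\theta_{ij}-l_m(\beta)c\bigr\}=0.
\end{equation*}
The forward direction uses concavity of $G_m^{1/m}$ on $\Theta$ to write $G_m^{1/m}(\theta)=\inf_{\beta\in\Theta}(G_m^{1/m})_{\theta^{ij}}(\beta)\theta_{ij}$ via the support-plane representation and Euler homogeneity; dividing through by $\operatorname{tr}S(\beta)$ and plugging in the definition (\ref{cbalpha}) of $l_m$ produces the Bellman form, with the infimum attained at $\beta=\theta$. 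The boundary case $\theta\in\partial\Theta$ is handled by the perturbation $\theta+\epsilon I\in\Theta$ and passage to the limit, using that $H_m(\theta+\epsilon I)$ vanishes at order $\ge1$ while $\operatorname{tr}S(\theta+\epsilon I)$ is bounded below. The reverse direction follows because the dual characterization $\overline\Theta=\{\theta:b^{ij}(\beta)\theta_{ij}\ge0\ \forall\beta\in\Theta\}$ gives $\theta\in\overline\Theta$, and then the $t$-translation argument (using $\lim_{t\to\infty}G_m(tI+\theta)=+\infty$ and continuity along the ray $t\mapsto G_m(tI+\theta)$) pins down the value $c$.

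Having reduced (\ref{chyphess}) to a constant-coefficient degenerate concave Bellman equation with controls $A=\Theta$, diffusion $\sigma^\beta=\sqrt{b(\beta)}$, drift $b^\beta\equiv0$, zero-order term $c^\beta\equiv0$, free term $f^\beta=l_m(\beta)f$ and boundary data $g=-\varphi$, I would substitute directly into the complex counterparts of Theorems 2.1--2.3 from \cite{InteriorRegularityI} as applied in \cite{InteriorRegularityII}. Three hypotheses must be checked: (a) existence of a global barrier $\psi$ with the three properties stated before Theorem \ref{cdrhp}, which follows from Assumption \ref{chypbdry} in strict $\Theta$-pseudoconvexity together with Lemma \ref{equiveqn} applied to $G_m\bigl(C(\rho-\epsilon|z|^2/2)_{z\bar z}\bigr)\ge0$, exactly as in Lemma \ref{psi}; (b) $\mathbb U^d$-invariance of the set $\mathbb B=\{b(\beta):\beta\in\Theta\}$ and of $l_m$, together with $\mathbb B\subset\{M\in\overline{\mathbb H_+^d}:\operatorname{tr}M=1\}$, both immediate from the complex Lemma \ref{T} analogue; and (c) weak non-degeneracy $\inf_{|\xi|=1}\sup_{\beta\in\Theta}b^{ij}(\beta)\xi^i\bar\xi^j\ge1/d$, which is witnessed by $\beta_0=I$, giving $b(\beta_0)=(1/d)I_{d\times d}$. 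Maclaurin's inequality applied to the eigenvalues $r_m,r_{m-1}$ shows $l_m\le G_m^{-1/m}(I)$, yielding the bound (\ref{c0ma}).

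The main obstacle I anticipate is not any single verification but the bookkeeping around the complex Hessian: one must check that the interior $C^{1,1}$ estimate in \cite{InteriorRegularityII}, which is stated there for complex Bellman equations, really accepts the control set $\Theta$ together with the Hermitian-valued diffusion $b(\beta)$ and preserves the $\mathbb U^d$-symmetry cancellation of the badly-behaved boundary terms; once this is in place the estimates (\ref{cfirstma})--(\ref{csecondma}) follow from the corresponding bounds in \cite{InteriorRegularityI, InteriorRegularityII}, and uniqueness of the strong solution in $C^{1,1}_{loc}(D)\cap C^{0,1}(\bar D)$ follows from the standard comparison principle for concave degenerate elliptic Bellman equations applied to $v$ and any competitor, as in the real case.
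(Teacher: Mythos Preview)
Your outline is broadly sound, but it diverges from the paper's actual proof in one essential respect. You plan to invoke ``complex analogues of Theorems 2.1--2.3 of \cite{InteriorRegularityI} established in \cite{InteriorRegularityII}'' as black boxes, checking $\mathbb U^d$-invariance of $\mathbb B$ as the relevant symmetry hypothesis. The paper does not proceed this way. Instead it reduces everything to a real stochastic control problem on $\mathbb R^{2d}$ via the homomorphism $\Phi:\mathbb C^{d\times d}\to\mathbb R^{2d\times 2d}$, writing $v(x)$ with diffusion $a(\alpha)=\tfrac14\Phi(b(\beta))$ and $h_{2m}(\alpha)=l_m(\beta)$, and then applies the \emph{real} Theorems 2.1--2.3 of \cite{InteriorRegularityI} directly.

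The substantive technical point the paper confronts---and which your proposal skips over---is that after this realification the set $\mathbb A=\Phi\mathbb B\subset\mathbb S^{2d}$ is \emph{not} invariant under all of $\mathbb O^{2d}$, so Assumption~2.2 of \cite{InteriorRegularityI} fails as stated. The paper resolves this by going inside the proof of Lemma~7.1 of \cite{InteriorRegularityI} and showing that the specific skew-symmetric matrix $P(x,\xi)$ needed there can be chosen of the form $P=\Phi Q$ with $Q$ skew-Hermitian (built from $\psi_{\bar z}$ and its derivative in direction $\xi$), so that $e^P=\Phi(e^Q)\in\Phi(\mathbb U^d)$ and hence $e^P\mathbb A(e^P)^*=\mathbb A$. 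This construction, together with the verification of (\ref{cimportant}), is the heart of Step~2 and is what lets the real machinery go through.

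Your route would be valid if \cite{InteriorRegularityII} really packages complex Bellman theorems under a clean $\mathbb U^d$-invariance hypothesis; but the paper itself does not rely on that, and its proof suggests that such a packaging is precisely what the $P=\Phi Q$ construction is supplying. If you want a self-contained argument, you should carry out that reduction and construction explicitly rather than assuming it away.
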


\begin{remark}
The regularity results in the complex case are still optimal in the same sense discussed in Subsection \ref{optimality} by constructing similar examples. Also, the readers interested in Remarks \ref{viscosity} and \ref{gammasubh} understand that similar remarks hold in the complex case. 
\end{remark}

\subsection{Proof of Theorem {\ref{cdrhp}}}\label{subsection32} We divide the proof of Theorem \ref{cdrhp} into the following three steps.
\begin{enumerate}
\item View $v(z)$ given by (\ref{cprma}) as a function $v(x)$ on $\mathbb R^{2d}$ and represent it as the value function of a stochastic control problem in $\mathbb R^{2d}$.
\item Utilize the results in \cite{InteriorRegularityI} to obtain the interior regularity for $v(x)$.
\item Verify that the associated dynamic programming equation of $v(x)$, along with the boundary condition, is equivalent to (\ref{chyphess}).
\end{enumerate}
\begin{proof}[Step 1]
In order to make use of the deduction in the real cases, we first define the following homomorphisms:
$$\Phi: \mathbb C^d\rightarrow\mathbb R^{2d}; \quad z\mapsto 
\begin{pmatrix}
\operatorname{Re}z\\
\operatorname{Im}z
\end{pmatrix}
$$
$$
\Phi: \mathbb C^{d\times d}\rightarrow\mathbb R^{2d\times 2d}; \quad\beta\mapsto 
\begin{pmatrix}
\operatorname{Re}\beta&\operatorname{Im}\beta\\
-\operatorname{Im}\beta&\operatorname{Re}\beta
\end{pmatrix}.
$$
To rewrite the value function in (\ref{cprma}) as a function on $\mathbb R^{2d\times 2d}$, we notice that
$$\Phi( z_t^{\beta,z})=\Phi z+\int_0^t\frac{1}{\sqrt 2}\Big(\Phi\sqrt{b({\beta_s})}\Big)dw_s,$$
where $w_t$ is a Wiener process of dimension $2d$. Denote
$$\alpha=\Phi\beta, \qquad\mathfrak A=\Phi\mathfrak B=\{\Phi\beta:\beta\in\mathfrak B\},$$
$$\Phi\mathbb H^d=\{\Phi\beta:\beta\in\mathbb H^d\}.$$
Note that $\Phi\mathbb H^d\subset\mathbb S^{2d}$ is a $d^2$-dimensional real vector space. On $\Phi\mathbb H^d$, define $$H_{2m}(\alpha)=G_m^2(\Phi^{-1}\alpha).$$
Then $H_{2m}$ is a $2m$-degree hyperbolic polynomial over $\Phi\mathbb H^d$ with respect to $I_{2d\times2d}$ and for each $\beta\in \mathbb H^d$
\allowdisplaybreaks
\begin{align*}
r_m(\beta)=&s_{2m}^{1/2}(\alpha),\\
r_{m-1}(\beta)=&\frac{d}{dt}r_m(\beta+tI_{d\times d})\Big|_{t=0}\\
=&\frac{d}{dt}s_{2m}^{1/2}(\alpha+tI_{2d\times 2d})\Big|_{t=0}\\
=&\frac{1}{2}s_{2m}^{-1/2}(\alpha)s_{2m-1}(\alpha),\\
\end{align*}
which implies that
\begin{align*}
m\Bigg[\frac{r_m^{1/m}(\beta)}{r^{1/(m-1)}_{m-1}(\beta)}\Bigg]^{m-1}=&2m\Bigg[\frac{s_{2m}^{1/(2m)}(\alpha)}{s^{1/(2m-1)}_{2m-1}(\alpha)}\Bigg]^{2m-1}\\
l_m(\beta)=&h_{2m}(\alpha).
\end{align*}
We also note that
$$\Phi(b(\beta))=\Bigg(\frac{2(H_{2m})_{\gamma^{ij}}(\alpha)}{\operatorname{tr}\big[(H_{2m})_{\gamma^{ij}}(\alpha)\big]}\Bigg)_{1\le i,j\le 2d}:=4a(\alpha).$$
Therefore we can rewrite (\ref{cprma}) as
\begin{equation}
v(x)=\inf_{\alpha\in\mathfrak A}E\bigg[\varphi(x^{\alpha,x}_{\tau^{\alpha,x}})-\int_0^{\tau^{\alpha,x}}f^{\alpha_t}(x^{\alpha,x}_t)dt\bigg],
\end{equation}
where
$$x_t^{\alpha,x}=x+\int_0^t\sqrt{2a(\alpha_s)}dw_s,$$
$$f^{\alpha}(x)=h_{2m}(\alpha)f(x).$$
\end{proof}

\begin{proof}[Step 2]
Let
$$\mathbb B=\{b^\beta: \beta\in\Theta\},\qquad\mathbb A=\Phi \mathbb B=\{\Phi (b^\beta): b^\beta\in\mathbb  B\}.$$
It is worth mentioning that since
$$\mathbb A\subset\bigg\{
\begin{pmatrix}
S&T\\
-T&S
\end{pmatrix}
: S\in\overline{\mathbb S^d_+}, \operatorname{tr}(S)=1;T \mbox{ is skew symmetric}\bigg\}
,$$
in general, the relation
$$O\mathbb A O^*=\mathbb A$$
doesn't hold for all $O\in\mathbb O^{2d}$. However, for any $U\in\mathbb U^d$, we have
$$U\mathbb B\bar U^*=\mathbb B,$$
which can play a role of Assumption 2.2 in \cite{InteriorRegularityI}. Indeed, we observe that
$$\mathbb A=\Phi\mathbb B=\Phi(U\mathbb B\bar U^*)=\Phi U\Phi\mathbb B\Phi(\bar U^*)=(\Phi U)\mathbb A(\Phi U)^*.$$
Moreover, we note that if $Q\in \mathbb C^{d\times d}$ is skew Hermitian, then $e^Q$ is unitary, and therefore
$e^{\Phi Q}=\Phi(e^Q)$ satisfies
$$(e^{\Phi Q})\mathbb A (e^{\Phi Q})^*=\mathbb A.$$
Therefore, in order to apply Theorems 2.1, 2.2 and 2.3 in \cite{InteriorRegularityI}, it suffice to find a suitable matrix function $P=P(x,\xi)$ from $D\times\mathbb R^{2d}$ to $\mathbb R^{2d\times2d}$, such that $P$ can be expressed as $\Phi Q$, where $Q$ is skew Hermitian, and $P(x,\xi)$ satisfies all properties it has in the proof of Lemma 7.1 in \cite{InteriorRegularityI}. 

To construct $P$ let us start from looking at Equation (7.1) in \cite{InteriorRegularityI}, the most crucial property it satisfies in the proof of Lemma 7.1 in \cite{InteriorRegularityI}.
We define
$$\chi: D_\delta^\lambda\times \mathbb C^d\rightarrow \mathbb C; (z,\xi)\mapsto -\frac{\psi_{\bar z^k}(\psi_{z^k})_{(\xi)}}{|\psi_{\bar z}|^2}$$
and
$$R: D_\delta^\lambda\times \mathbb C^d\rightarrow \mathbb C^{d\times d}; (z,\xi)\mapsto \bigg(\frac{(\psi_{z^k})_{(\xi)}\psi_{\bar z^j} -(\psi_{\bar z^j})_{(\xi)}\psi_{z^k}}{|\psi_{\bar z}|^2}\bigg)_{1\le j,k\le d},$$
which are analogous to $\rho$ and $P$ in Lemma 7.1 in \cite{InteriorRegularityI}.

We claim that
\begin{equation}\label{nm}
(\psi_{\bar z})_{(\xi)}+R\psi_{\bar z}+\chi\psi_{\bar z}=0.
\end{equation}
Indeed, we have
\begin{align*}
&\Big[(\psi_{\bar z})_{(\xi)}+R\psi_{\bar z}+\chi\psi_{\bar z}\Big]^j\\
=&(\psi_{\bar z^j})_{(\xi)}+\frac{(\psi_{ z^k})_{(\xi)}\psi_{\bar z^j}-(\psi_{\bar z^j})_{(\xi)}\psi_{z^k}}{\psi_z\psi_{\bar z}}\psi_{\bar z^k}-\frac{\psi_{\bar z^k}(\psi_{ z^k})_{(\xi)}}{\psi_z\psi_{\bar z}}\psi_{\bar z^j}\\
=&(\psi_{\bar z^j})_{(\xi)}\Big[1-\frac{\psi_{z^k}\psi_{\bar z^k}}{\psi_z\psi_{\bar z}}\Big]\\
=&0.
\end{align*}
Next, we notice that $\chi$ is not real in general, so we decompose it as $\chi=\varrho+i\varkappa$, where $\varrho$ and $\varkappa$ are real valued. If we denote $R+i\varkappa I$ as $Q$, the equation (\ref{nm}) can be rewritten as
\begin{equation}\label{cimportant}
(\psi_{\bar z})_{(\xi)}+Q\psi_{\bar z}+\varrho\psi_{\bar z}=0.
\end{equation}
We emphasis that $\varrho$ is real and $Q$ is skew Hermitian.
From (\ref{cimportant}) and the fact that $\psi_x=2\Phi(\psi_{\bar z})$, we obtain
\begin{align*}
0=&\Big(\alpha\alpha^*\big((\psi_{\bar z})_{(\xi)}+Q\psi_{\bar z}+\varrho\psi_{\bar z}\big),(\psi_{\bar z})_{(\xi)}+Q\psi_{\bar z}+\varrho\psi_{\bar z}\Big)\\
=&\bigg(\Phi\Big(\alpha\alpha^*\big((\psi_{\bar z})_{(\xi)}+Q\psi_{\bar z}+\varrho\psi_{\bar z}\big)\Big),\Phi\Big((\psi_{\bar z})_{(\xi)}+Q\psi_{\bar z}+\varrho\psi_{\bar z}\Big)\bigg)\\
=&\frac{1}{4}\Big(\beta\beta^*\big((\psi_x)_{(\xi)}+(\Phi Q)\psi_x+\varrho\psi_x \big),(\psi_x)_{(\xi)}+(\Phi Q)\psi_x+\varrho\psi_x\Big)
\end{align*}
Therefore if we let $P=\Phi Q$,
$$\psi_{(\xi)(\beta^k)}+\varrho\psi_{(\beta^k)}+\psi_{(P\beta^k)}=0.$$
It is also not hard to see that $P$ and $\varrho$ we define here satisfy all the other property in Lemma 7.1 in \cite{InteriorRegularityI}. As a result, we can apply Theorem 2.1, 2.2 and 2.3 in \cite{InteriorRegularityI} to obtain all results stated in Theorem \ref{cdrhp} on $v$ defined by (\ref{cprma}).
\end{proof}

\begin{proof}[Step 3]
It remains to verify that the associated dynamic programing equation is equivalent to the complex Hessian equation. To write down the real Bellman equation we note that its diffusion term is $\sqrt{2a(\alpha)}$. Hence the associated dynamic programing equation is the real Bellman equation
\begin{equation}
\inf_{\alpha\in \Phi\Theta}\Big\{\operatorname{tr}\big[a(\alpha)v_{xx}\big]-h_{2m}(\alpha)f\Big\}=0,
\end{equation}
which is equivalent to 
\begin{equation}
\inf_{\beta\in \Theta}\Big\{\frac{1}{4}\operatorname{tr}\big[(\Phi b(\beta))v_{xx})\big]-l_{m}(\beta)f\Big\}=0.
\end{equation}
To write down the corresponding complex Bellman equation, it suffices to notice that
$$\operatorname{tr}\big[(\Phi b(\beta))v_{xx}\big]=4\operatorname{tr}\big[b(\beta)v_{z\bar z}\big].$$
Therefore the complex Bellman equation is
\begin{equation}\label{cbellmanmab}
\inf_{\beta\in \Theta}\Big\{\operatorname{tr}\big[b(\beta)v_{z\bar z}\big]-l_{m}(\beta)f\Big\}=0,
\end{equation}
which has the same form of (\ref{equivbell}). Therefore, the equivalence between (\ref{cbellmanmab}) and the complex Hessian equation in (\ref{complexma}) with the relation $v_{z\bar z}\in\overline\Theta$ can be verified by repeating the proof of Lemma \ref{equiveqn}. The theorem is proved.
\end{proof}

\section{Examples of degenerate Hessian equations}\label{secexa}
In this section, we give an account of applications of Theorems \ref{drhp} and \ref{cdrhp} to some celebrated degenerate Hessian equations. Although the examples in Subsections \ref{sb1} $\sim$ \ref{sb4} have been introduced and studied in depth in previous literature, to the author's knowledge, the regularity results for the degenerate cases we state here are new.

\subsection{Two typical ways to construct hyperbolic polynomials}
Given any $m$-th degree homogeneous $A$-hyperbolic polynomial $H_m$ on the real vector space $\mathbb V$, for each $1\le k\le m$, let $\sigma_{k}[H_m;A]$ denote the $k$-th elementary symmetric functions of the $A$-eigenvalues of $H_m$ and $\mu_{k}[H_m;A]$ denote the product of all $k$-sums of the the $A$-eigenvalues of $H_m$, i.e.
$$\sigma_{k}[H_m;A](M)={\sum_{|I|=k}}'\prod_{n=1}^k\lambda_{i_n}[H_m;A](M),$$
$$\mu_{k}[H_m;A](M)={\prod_{|I|=k}}'\sum_{n=1}^{k}\lambda_{i_n}[H_m;A](M).$$
We abbreviate $\sigma_{k}[H_m;A]$ and $\mu_{k}[H_m;A]$ by $\sigma_{k,m}$ and $\mu_{k,m}$ respectively, or simply $\sigma_{k}$ and $\mu_{k}$ when this will not cause confusion.

It is well-known that for each $1\le k\le m$, $\sigma_k$ is a $k$-th degree homogeneous $A$-hyperbolic polynomial, and $\mu_k$ is a $m\choose k$-th degree homogeneous $A$-hyperbolic polynomial. $\sigma_k$ is hyperbolic because by Rolle's theorem the $j$-th order derivative of $p(t)=H_m(M+tA)$ has $(m-j)$ real roots. $\mu_k$ is hyperbolic because $Q(x)=\prod_{|I|=k}'\sum_{n=1}^{k} x_{i_n} $ is a symmetric hyperbolic polynomial with respect to $e=(1,...,1)$ over $\mathbb R^m$, and consequently $q(M)=Q(\lambda(M))$, the composition of $Q$ and the eigenvalue function $\lambda$, where $\lambda(M)=(\lambda_1,...,\lambda_m)(M)$, is hyperbolic with respect to $A$ over $\mathbb V$. (See Section 7 in \cite{MR3055586} for details). It is also not hard to see that their associated G{\aa}rding cones satisfy:
$$\Gamma_{\sigma_1}\supset\Gamma_{\sigma_2}\supset\cdots\supset\Gamma_{\sigma_m},$$
$$\Gamma_{\mu_1}\subset\Gamma_{\mu_2}\subset\cdots\subset\Gamma_{\mu_m}.$$

Now in particular we let $\mathbb V$ be $\mathbb S_+^d$ (in Subsections \ref{sb1} and \ref{sb2}) or $\mathbb H_+^d$ (in Subsections \ref{sb3} and \ref{sb4}), $m=d$, $H=\det_{\mathbb V}$ and $A=I_{\mathbb V}$, so that the eigenvalues are the usual eigenvalues of the matrix. Then for any $C^2$ function on $\Rd$ (resp. $\mathbb C^d$), $1\le k\le d$, the differential operator $\sigma_k(u_{xx})$ (resp. $\sigma_k(u_{z\bar z})$) is called the real (resp. complex) $k$-Hessian operator in previous literature, and we call $\mu_k(u_{xx})$ (resp. $\mu_k(u_{z\bar z})$) the real (resp. complex) $k$-Monge-Amp\`ere operator. It is not hard to see that, say, in the real case,
$$\sigma_1(u_{xx})=\mu_d(u_{xx})=\triangle u,$$
$$\quad \sigma_d(u_{xx})=\mu_1(u_{xx})=\det(u_{xx}),$$
$$\mu_{d-1}(u_{xx})=\prod_{i=1}^d(\triangle u-\lambda_i(u_{xx}))=\det(\triangle u\cdot I-u_{xx}).$$
Since, say, in the real case,
$$\Gamma_{\sigma_d}=\Gamma_{\mu_1}=\mathbb S_+^d,$$
Assumption \ref{ellip} on the ellipticity holds, and Theorem \ref{drhp} is applicable for suitable domains satisfying Assumption \ref{hypbdry}, i.e. the so-called strict $\Gamma$-convexity. It is also worth mentioning that since
\begin{align}
\partial \Gamma_{\sigma_k}\cap\overline{\mathbb S_+^d}=&\{M\in\overline\Gamma: \mbox{$d-k+1$ or more of $\lambda_i(M)$ are zero}\},\label{type1}\\
\partial \Gamma_{\mu_k}\cap\overline{\mathbb S_+^d}=&\{M\in\overline\Gamma: \mbox{$k$ or more of $\lambda_i(M)$ are zero}\},\label{type11}
\end{align}
by (\ref{degenerate}), all of the elliptic operators $\sigma_k$ and $\mu_k$ (restricted on the closure of their associated G{\aa}rding cones) are degenerate except $\sigma_1$ and $\mu_d$.

\subsection{Real $k$-Hessian equations}\label{sb1}
Suppose $d\ge2$ and $2\le k\le d$ are integers. Consider the Dirichlet problem for the degenerate real $k$-Hessian equation:
\begin{equation}\label{sigmak}
\left\{\begin{array}{rcll}
\sigma_k(u_{xx})&=&f^k  &\text{in }D\\ 
u_{xx}&\in& \overline\Gamma_{\sigma_k}&\text{in }D\\
u&=&\varphi  &\text{on }\partial D,
\end{array}
\right.
\end{equation}
where $\Gamma_{\sigma_k}$ is the G{\aa}rding cone associated to $\sigma_k$, $D$ is a bounded smooth domain in $\Rd$ which is strictly $\Gamma_{\sigma_k}$-convex (i.e. satisfying Assumption \ref{hypbdry} for $\Gamma_{\sigma_k}$), and $f: D\rightarrow [0,\infty)$ and $\varphi: \partial D\rightarrow \mathbb R$ are continuous functions.

Assumption \ref{hypbdry} is natural and necessary. By (\ref{type1}) and Lemma A in \cite{MR806416}, $\partial D$ is necessarily connected. (Recall that we suppose $k\ge 2$.) Then in this situation Condition (\ref{curvature}) holds if and only if there exists a sufficiently large real number $t$, such that for all $x\in\partial D$,
$$\sigma_{k,d}\Big(\operatorname{II}_x+t(\operatorname P_{\vec n})_x\Big)>0,$$
which is equivalent to
\begin{equation}\label{k-1convex}
\sigma_{k-1,d-1}(\operatorname{II}_x)>0,\quad\forall x\in\partial D.
\end{equation}
See Remark 1.2 in \cite{MR806416}.
The condition (\ref{k-1convex}) means that at every point on $\partial D$, the $(k-1)$-th elementary symmetric function of its  principle curvatures of $\partial D$ is positive. This was called strict $(k-1)$-convexity in some previous literature. The necessity of these three equivalent conditions was proved in \cite{MR806416}, Proposition 1.3, for vanishing boundary data.

By Lemma \ref{equiveqn}, the system (\ref{sigmak}) is equivalent to
\begin{equation}\label{bellsigmak}
\left\{
\begin{array}{rcll}
\displaystyle\inf_{\alpha\in\Gamma_{\sigma_k}}\big\{a^{ij}(\alpha)u_{x^ix^j}-h(\alpha)f\big\}&=&0&\mbox{ in $D$}\\
u&=&\varphi&\mbox{ on $\partial D$},
\end{array}
\right.
\end{equation}
where
$$a^{ij}(\alpha)=\frac{(\sigma_k)_{\gamma^{ij}}(\alpha)}{\operatorname{tr}\big[(\sigma_k)_{\gamma^{ij}}(\alpha)\big]},\quad\quad h(\alpha)=\frac{k}{d-k+1}\Bigg[\frac{\sigma_k^{1/k}(\alpha)}{\sigma_{k-1}^{1/(k-1)}(\alpha)}\Bigg]^{k-1}.$$

Define the (bounded and Borel) probabilistic solution $u$ to (\ref{bellsigmak}) by (\ref{prma}), i.e. the value function of the stochastic control associated to (\ref{bellsigmak}). Applying Theorem \ref{hyphess}, we have:
\begin{enumerate}
\item If $f,\varphi\in C^{0,1}(\bar D)$, then $u\in C^{0,1}_{loc}(D)\cap C(\bar D)$, and the first (generalized) derivatives satisfy the estimate (\ref{firstma}).
\item If $f,\varphi\in C^{1,1}(\bar D)$, then $u\in C^{1,1}_{loc}(D)\cap C^{0,1}(\bar D)$ and the second (generalized) derivatives satisfy the estimate (\ref{secondma}). Meanwhile, $u$ is the unique strong solution to (\ref{sigmak}), i.e.
\begin{equation*}
\left\{\begin{array}{rcll}
\sigma_k(u_{xx})&=&f^k  &\text{a.e. in }D\\ 
u_{xx}&\in& \overline\Gamma_{\sigma_k}&\text{a.e. in }D\\
u&=&\varphi  &\text{on }\partial D.
\end{array}
\right.
\end{equation*}
\end{enumerate}

\subsection{Real $k$-Monge-Amp\`ere equations}\label{sb2}
Suppose $d\ge 2$ and $1\le k\le d-1$ are integers.
Consider the Dirichlet problem for the degenerate real $k$-Monge-Amp\`ere equation:
\begin{equation}\label{muk}
\left\{\begin{array}{rcll}
\mu_k(u_{xx})&=&f^{d\choose k}  &\text{in }D\\ 
u_{xx}&\in&\overline\Gamma_{\mu_k}&\text{in }D\\
u&=&\varphi  &\text{on }\partial D,
\end{array}
\right.
\end{equation}
where $\Gamma_{\mu_k}$ is the G{\aa}rding cone associated to $\mu_k$, $D$ is a bounded smooth domain in $\Rd$ which is strictly $\Gamma_{\mu_k}$-convex, and $f: D\rightarrow [0,\infty)$ and $\varphi: \partial D\rightarrow \mathbb R$ are continuous functions.

Under assumption \ref{hypbdry}, by (\ref{type11}) and Lemma A in \cite{MR806416}, $\partial D$ is necessarily connected. In this situation, as discussed in \cite{MR2543918}, the condition (\ref{hypbdry}) holds if and only if for each $x\in\partial D$,
$$\operatorname{tr}\big(\operatorname{II}_x|_W\big)>0$$
for all $k$-dimensional hyperplanes $W$ which are tangent to $\partial D$ at $x$,
which is equivalent to
$$\varkappa_1+\cdots+\varkappa_k>0 \mbox{ on }\partial D,$$
where $\varkappa_1\le\cdots\le\varkappa_{d-1}$ are the ordered principle curvatures of $\partial D$ with respect to the interior normal. This condition was also called strict $k$-convexity in some previous literature, which shouldn't be confused with the strict $(k-1)$-convexity mentioned in the previous subsection.

By Lemma \ref{equiveqn}, the system (\ref{muk}) is equivalent to
\begin{equation}\label{bellmuk}
\left\{
\begin{array}{rcll}
\displaystyle\inf_{\alpha\in\Gamma_{\mu_k}}\big\{a^{ij}(\alpha)u_{x^ix^j}-h(\alpha)f\big\}&=&0&\mbox{ in $D$}\\
u&=&\varphi&\mbox{ on $\partial D$},
\end{array}
\right.
\end{equation}
where
$$a^{ij}(\alpha)=\frac{(\mu_k)_{\gamma^{ij}}(\alpha)}{\operatorname{tr}\big[(\mu_k)_{\gamma^{ij}}(\alpha)\big]},\quad\quad h(\alpha)={d\choose k}\frac{\mu_k^{1-1/{d\choose k}}(\alpha)}{(\mu_k)_{(I)}(\alpha)}.$$

Define the probabilistic solution $u$ to (\ref{bellmuk}) by (\ref{prma}). Applying Theorem \ref{hyphess}, we have:
\begin{enumerate}
\item If $f,\varphi\in C^{0,1}(\bar D)$, then $u\in C^{0,1}_{loc}(D)\cap C(\bar D)$, and the first (generalized) derivatives satisfy the estimate (\ref{firstma}).
\item If $f,\varphi\in C^{1,1}(\bar D)$, then $u\in C^{1,1}_{loc}(D)\cap C^{0,1}(\bar D)$ and the second (generalized) derivatives satisfy the estimate (\ref{secondma}). Meanwhile, $u$ is the unique strong solution to (\ref{muk}), i.e.
\begin{equation}\label{strmuk}
\left\{\begin{array}{rcll}
\mu_k(u_{xx})&=&f^{d\choose k}  &\text{a.e. in }D\\ 
u_{xx}&\in& \overline\Gamma_{\mu_k}&\text{a.e. in }D\\
u&=&\varphi  &\text{on }\partial D.
\end{array}
\right.
\end{equation}
\end{enumerate}
\begin{remark}
The second relation in (\ref{strmuk}) means $u$ is $\overline\Gamma_{\mu_k}$-subharmonic in $D$, i.e. the restriction $u|_{D\cap W}$ is subharmonic for all affine $k$-planes $W\subset\Rd$. (cf. Remark \ref{gammasubh} and Section 6, \cite{MR2927376}) 
\end{remark}

\subsection{Complex $k$-Hessian equations}\label{sb3} Suppose $d\ge 2$ and $2\le k\le d$ are integers.
Consider the Dirichlet problem for the degenerate complex $k$-Hessian equation:
\begin{equation}\label{csigmak}
\left\{\begin{array}{rcll}
\sigma_k(u_{z\bar z})&=&f^k  &\text{in }D\\ 
u_{z\bar z}&\in&\overline\Theta_{\sigma_k}&\text{in }D\\ 
u&=&\varphi  &\text{on }\partial D,
\end{array}
\right.
\end{equation}
where $D$ is a bounded smooth domain in $\mathbb C^d$ which is strictly $\Theta_{\sigma_k}$-pseudoconvex (cf. Assumption \ref{chypbdry}), and $f: D\rightarrow [0,\infty)$ and $\varphi: \partial D\rightarrow \mathbb R$ are continuous functions.

In this situation, similarly to the real case, Assumption \ref{chypbdry} holds if and only if 
\begin{equation}\label{k-1pseudoconvex}
\begin{gathered}
\partial D \mbox{ is connected,}\\
\forall x\in\partial D,\quad\sigma_{k-1,d-1}(\operatorname{L}_x)>0.
\end{gathered}
\end{equation}
Recall that $\operatorname L_x$ is the Levi form of $\partial D$ at $x$.

The system (\ref{csigmak}) is equivalent to
\begin{equation}\label{cbellsigmak}
\left\{
\begin{array}{rcll}
\displaystyle\inf_{\beta\in\Theta_{\sigma_k}}\big\{b^{ij}(\beta)u_{z^i\bar z^j}-l(\beta)f\big\}&=&0&\mbox{ in $D$}\\
u&=&\varphi&\mbox{ on $\partial D$},
\end{array}
\right.
\end{equation}
where
$$b^{ij}(\beta)=\frac{(\sigma_k)_{\theta^{ij}}(\beta)}{\operatorname{tr}\big[(\sigma_k)_{\theta^{ij}}(\beta)\big]},\quad\quad l(\alpha)=\frac{k}{d-k+1}\Bigg[\frac{\sigma_k^{1/k}(\beta)}{\sigma_{k-1}^{1/(k-1)}(\beta)}\Bigg]^{k-1}.$$

Defining the probabilistic solution $u$ to (\ref{cbellsigmak}) by (\ref{cprma}) and applying Theorem \ref{chyphess}, we obtain:
\begin{enumerate}
\item If $f,\varphi\in C^{0,1}(\bar D)$, then $u\in C^{0,1}_{loc}(D)\cap C(\bar D)$, and the first (generalized) derivatives satisfy the estimate (\ref{cfirstma}).
\item If $f,\varphi\in C^{1,1}(\bar D)$, then $u\in C^{1,1}_{loc}(D)\cap C^{0,1}(\bar D)$ and the second (generalized) derivatives satisfy the estimate (\ref{csecondma}). Meanwhile, $u$ is the unique strong solution to (\ref{csigmak}).
\end{enumerate}

\subsection{Complex $k$-Monge-Amp\`ere equations}\label{sb4}
Suppose $d\ge 2$ and $1\le k\le d-1$ are integers. Consider the Dirichlet problem for the degenerate complex $k$-Monge-Amp\`ere equation:
\begin{equation}\label{cmuk}
\left\{\begin{array}{rcll}
\mu_k(u_{z\bar z})&=&f^{m\choose k}  &\text{in }D\\ 
u_{z\bar z}&\in&\overline\Theta_{\mu_k}&\text{in }D\\
u&=&\varphi  &\text{on }\partial D.
\end{array}
\right.
\end{equation}
where $\Theta_{\mu_k}$ is the G{\aa}rding cone associated to $\mu_k$, $D$ is a bounded smooth domain in $\mathbb C^d$ which is strictly $\Theta_{\mu_k}$-pseudoconvex, and $f: D\rightarrow [0,\infty)$ and $\varphi: \partial D\rightarrow \mathbb R$ are continuous functions. 

In this situation, Assumption \ref{chypbdry} holds if and only if 
\begin{equation}\label{kpseudoconvex}
\begin{gathered}
\partial D \mbox{ is connected,}\\
\varkappa_1+\cdots+\varkappa_k>0 \mbox{ on }\partial D,
\end{gathered}
\end{equation}
where $\varkappa_1\le\cdots\le\varkappa_{d-1}$ are the ordered (real) eigenvalues of the Levi form of $\partial D$.

The system (\ref{cmuk}) is equivalent to
\begin{equation}\label{cbellmuk}
\left\{
\begin{array}{rcll}
\displaystyle\inf_{\beta\in\Theta_{\mu_k}}\big\{b^{ij}(\beta)u_{z^i\bar z^j}-l(\beta)f\big\}&=&0&\mbox{ in $D$}\\
u&=&\varphi&\mbox{ on $\partial D$},
\end{array}
\right.
\end{equation}
where
$$b^{ij}(\beta)=\frac{(\mu_k)_{\theta^{ij}}(\beta)}{\operatorname{tr}\big[(\mu_k)_{\theta^{ij}}(\beta)\big]},\quad\quad l(\beta)={d\choose k}\frac{\mu_k^{1-1/{d\choose k}}(\beta)}{(\mu_k)_{(I)}(\beta)}.$$

Defining the probabilistic solution $u$ to (\ref{cbellmuk}) by (\ref{cprma}) and applying Theorem \ref{chyphess}, we obtain:
\begin{enumerate}
\item If $f,\varphi\in C^{0,1}(\bar D)$, then $u\in C^{0,1}_{loc}(D)\cap C(\bar D)$, and the first (generalized) derivatives satisfy the estimate (\ref{cfirstma}).
\item If $f,\varphi\in C^{1,1}(\bar D)$, then $u\in C^{1,1}_{loc}(D)\cap C^{0,1}(\bar D)$ and the second (generalized) derivatives satisfy the estimate (\ref{csecondma}). Meanwhile, $u$ is the unique strong solution to (\ref{cmuk}).
\end{enumerate}

\section{Real Hessian equations under general settings}\label{section5}

In this section, we generalize the results we obtained in Section \ref{section2}. Roughly speaking, by going through the proof in Section \ref{section2}, we notice that the results are still true when replacing $H_m^{1/m}$ with any monotonic, $1$-homogeneous, concave, symmetric function defined on any cone on $\mathbb S^d$ which is elliptic, convex and symmetric. 

\subsection{Statement of the results} We consider the Dirichlet problem for the general real Hessian equation:
\begin{equation}\label{rhess}
\left\{\begin{array}{rcll}
F(u_{xx})&=&f  &\text{in }D\\ 
u_{xx}&\in&\overline\Gamma&\text{in }D\\
u&=&\varphi  &\text{on }\partial D,
\end{array}
\right.
\end{equation}
where $D$ is a bounded smooth domain in $\Rd$ ($d\ge2$), $f: D\rightarrow [0,\infty)$ and $\varphi: \partial D\rightarrow \mathbb R$ are bounded and Borel measurable functions, and $\Gamma$, $F$ and $D$ satisfy the following Assumptions \ref{a1}, \ref{a2} and \ref{a3}, respectively.

Our assumptions here are the following. 

\begin{assumption}\label{a1} $\Gamma$ is a nonempty proper open cone in $\mathbb S^d$ with vertex at the origin, satisfying the following conditions:
\begin{enumerate}
\item (Ellipticity) $\overline\Gamma+\overline{\mathbb S^d_+}\subset\overline\Gamma$.
\item (Convexity) $\Gamma$ is convex.
\item (Symmetry) $O \Gamma O^*=\Gamma,\forall O\in\mathbb O^d$.
\end{enumerate}
\end{assumption}

\begin{assumption} \label{a2}  $F$ is a function defined on $\overline{\Gamma}$ satisfying 
\begin{equation}\label{match}
F|_{\partial \Gamma}=0,\quad F|_{\Gamma}>0
\end{equation} 
and the following conditions:
\begin{enumerate}
\item (Homogeneity) $F(tM)=tF(M), \forall t\ne0, M\in \overline\Gamma$.
\item (Concavity) $F$ is concave.
\item (Symmetry) $F(OMO^*)=F(M), \forall M\in \Gamma,O\in\mathbb O^d$.
\end{enumerate}
\end{assumption}

\begin{assumption}\label{a3} $D$ is
strictly $\Gamma$-convex. (cf. Assumption \ref{hypbdry})
\end{assumption}

This setup generally covers the situations studied in Section \ref{section2} and includes more interesting examples to be discussed in Section \ref{secexamore}.

\begin{remark}\label{rmk50}
With Assumptions \ref{a1} and \ref{a2}, the $\overline{\mathbb S_+^d}$-monotonicity of $F$, i.e. the condition
\begin{equation}\label{mnt}
F(M+S)\ge F(M),\quad \forall M\in\Gamma, S\in\overline{\mathbb S_+^d},
\end{equation}
is automatically true, by either (1) and (2) in Assumption \ref{a2} and then Assumption \ref{a1} (1), or Condition (\ref{match}), Assumption \ref{a2} (2) and Assumption \ref{a1} (1). 
\end{remark}

\begin{remark}\label{rmk51}
The assumption on the homogeneity of $F$ is not necessarily of order 1. To be precise, suppose $F$ satisfies Assumption \ref{a2} but with $m$-homogeneity,  $m\ne1$, rather than $1$-homogeneity. If $m>1$, then $F^{1/m}$ satisfies Assumption \ref{a2}. (Because $\phi(t)=t^{1/m}$ is increasing and concave on $[0,\infty)$ when $m>1$.) If $0<m<1$, then $F^{1/m}$ satisfies Assumption \ref{a2} provided that $F^{1/m}$ is concave.

\end{remark}

\begin{remark}\label{rmk52}
The concavity of $F$ implies the local Lipschitz continuity of $F$, which is all we need on the regularity of it.
\end{remark}

\begin{remark}\label{rmk53}
The homogeneity condition on $F$, i.e. Assumption \ref{a2} (1) can be weakened by some growth condition of $F$ in the direction of $I$. Precisely, Assumption \ref{a2} (1) can be replaced with
\begin{enumerate}
\item[(1')] 
$F_{(I)}(M)\ge\nu$ at each $M\in\Gamma$ where $F$ is differentiable in the direction of $I$, with $\nu$ a  positive constant.
\end{enumerate}
or
\begin{enumerate}
\item[(1'')] 
$\displaystyle\lim_{t\rightarrow+\infty}F(M+tI)=+\infty, \forall M\in\partial \Gamma.$
\end{enumerate}
Naturally, when either (1') or (1'') was adopted, the set $\Gamma$ is still supposed to satisfy Assumption \ref{a1} except that it is a cone. 

This can be understood after going through the proof of Theorem \ref{rhess}, as we will explain in Remark \ref{explain}, given right after we finish the proof of Theorem \ref{rhess}.
\end{remark}

To introduce the stochastic representation of the solution to (\ref{rhess}), we first define 
$$\Gamma'=\{\gamma\in\Gamma: F \mbox{ is differentiable at }\gamma\}$$
and on $\Gamma'$
\begin{equation}\label{gaalpha}
a^{ij}(\gamma)=\frac{F_{\gamma^{ij}}(\gamma)}{\operatorname{tr}\big[F_{\gamma^{ij}}(\gamma)\big]},\quad h(\gamma)=\frac{1}{F_{(I)}(\gamma)}.
\end{equation}
(We will show in Lemma \ref{lm1} that on $\Gamma'$, the symmetric matrix $a(\gamma):=(a^{ij}(\gamma))_{1\le i,j\le d}$ is semi-positive and $\operatorname{tr}[F_{\gamma^{ij}}(\gamma)]=F_{(I)}(\gamma)$ is strictly positive, so (\ref{gaalpha}) is well-defined.)

Then we let $w_t$ be a Wiener process of dimension $d$ and $\mathfrak A$ be the set of progressively-measurable processes $\alpha=(\alpha_t)_{t\ge0}$ with values in $\Gamma'$ for all $t\ge0$. On $\bar D$, define
\begin{equation}\label{grp}
v(x)=\inf_{\alpha\in\mathfrak A}E\bigg[\varphi\big(x^{\alpha,x}_{\tau^{\alpha,x}}\big)-\int_0^{\tau^{\alpha,x}}h(\alpha_t)f\big(x_t^{\alpha,x}\big)dt\bigg],
\end{equation}
with
\begin{equation}\label{grp2}
x_t^{\alpha,x}=x+\int_0^t\sqrt{2a(\alpha_s)}dw_s,  \ \forall \alpha\in\mathfrak A,
\end{equation}
and 
$$\tau^{\alpha,x}=\inf\{t\ge0:x_t^{\alpha,x}\notin D\}.$$

\begin{theorem}\label{ghessthm}
$v$ given by (\ref{grp}) is Borel measurable and bounded, and it satisfies the estimate (\ref{0ma}) with $mH_m^{-1/m}(I)$ replaced by $1/F(I)$.

 If $f,\varphi\in C^{0,1}(\bar D)$, then $v\in C^{0,1}_{loc}(D)\cap C(\bar D)$, and the first (generalized) derivatives satisfy the estimate (\ref{firstma}).

 If $\varphi\in C^{1,1}(\bar D)$, $f\in C^{0,1}(\bar D)$ and is quasi-convex on $\bar D$, then $v\in C^{1,1}_{loc}(D)\cap C^{0,1}(\bar D)$ and the second (generalized) derivatives satisfy the estimate (\ref{secondma}). Meanwhile, $v$ is the unique strong solution to (\ref{rhess}), i.e.
 \begin{equation}\label{rhessae}
\left\{\begin{array}{rcll}
F(u_{xx})&=&f  &\text{a.e. in }D\\ 
u_{xx}&\in&\overline\Gamma&\text{a.e. in }D\\
u&=&\varphi  &\text{on }\partial D.
\end{array}
\right.
\end{equation}

\end{theorem}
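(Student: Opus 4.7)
The plan is to follow the two-step template of Theorem \ref{drhp}: first rewrite the system (\ref{rhess}) as a normalized degenerate Bellman equation with constant coefficients, then invoke the representation and interior regularity results of \cite{InteriorRegularityI}. The novelty over Section \ref{section2} is that the hyperbolic polynomial $H_m^{1/m}$ is replaced by a general $F$ satisfying Assumptions \ref{a1} and \ref{a2}, so the explicit polynomial structure is unavailable and the non-differentiability set $\Gamma\setminus\Gamma'$ must be handled carefully.

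First I would prove the analogue of Lemma \ref{equiveqn}: for every $c\ge 0$, the system $F(\gamma)=c$, $\gamma\in\overline{\Gamma}$ is equivalent on $\overline{\Gamma}$ to $\inf_{\alpha\in\Gamma'}\{a^{ij}(\alpha)\gamma_{ij}-h(\alpha)c\}=0$. For the forward direction on $\Gamma$, the concavity and $1$-homogeneity of $F$ combined with Euler's identity $F_{\gamma^{ij}}(\alpha)\alpha_{ij}=F(\alpha)$ yield the representation $F(\gamma)=\inf_{\alpha\in\Gamma'}F_{\gamma^{ij}}(\alpha)\gamma_{ij}$, with the infimum attained at $\alpha=\gamma$ whenever $\gamma\in\Gamma'$; here $\Gamma'$ is dense in $\Gamma$ by Rademacher's theorem, since $F$ is locally Lipschitz by Remark \ref{rmk52}. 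The supporting-hyperplane inequality applied at $M=I$, together with Euler's identity, yields the key uniform positivity $F_{(I)}(\alpha)\ge F(I)>0$ on $\Gamma'$, which lets us divide by $\operatorname{tr}[F_{\gamma^{ij}}(\alpha)]=F_{(I)}(\alpha)$ and arrive at the Bellman form. The boundary case $\gamma\in\partial\Gamma$ (with $c=0$ by (\ref{match})) is treated by the perturbation $\gamma+\epsilon I$, $\epsilon\downarrow 0$, exactly as in Lemma \ref{equiveqn}; the reverse direction uses the dual characterization $\overline{\Gamma}=\{\gamma:a^{ij}(\alpha)\gamma_{ij}\ge 0,\,\forall\alpha\in\Gamma'\}$ and sliding along $tI+\gamma$ to identify the scalar.

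Second, I would verify the analogues of Lemmas \ref{T} and \ref{psi}. The positive semidefiniteness $\zeta^*a(\alpha)\zeta\ge 0$ follows from the $\overline{\mathbb{S}_+^d}$-monotonicity of $F$ (Remark \ref{rmk50}) via $F_{(\zeta\zeta^*)}(\alpha)\ge 0$; the orthogonal invariances $a(O\alpha O^*)=Oa(\alpha)O^*$ and $h(O\alpha O^*)=h(\alpha)$ follow from Assumption \ref{a2} (3) as in (\ref{repeat}); and $h\le 1/F(I)$ on $\Gamma'$ from the uniform positivity above. The $\mathbb{O}^d$-symmetry of $F$ also forces $I\in\Gamma'$ with $a^I=(1/d)I_{d\times d}$, because the subdifferential at $I$ is a convex $\mathbb{O}^d$-invariant set whose elements have trace $F(I)$ by Euler, hence reduces to the single point $(F(I)/d)I$. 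For the global barrier, Assumption \ref{a3} and Theorem 5.12 of \cite{MR2487853} furnish a defining function $\rho$ satisfying (\ref{zai}); applying the Bellman equivalence to $C(\rho-\epsilon|x|^2/2)$ for large $C$ then produces $\psi=-C\rho$ with the three properties of Lemma \ref{psi}.

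Finally, I would apply Theorems 2.1, 2.2 and 2.3 of \cite{InteriorRegularityI} with $A=\Gamma'$, $a^\alpha$ and $h(\alpha)$ as in (\ref{gaalpha}), $\sigma^\alpha=\sqrt{2a^\alpha}$, $b^\alpha=c^\alpha=0$, $f^\alpha(x)=h(\alpha)f(x)$ and $g(x)=-\varphi(x)$. The assumptions of \cite{InteriorRegularityI}---the containment $\mathbb{A}\subset\{M\in\overline{\mathbb{S}_+^d}:\operatorname{tr}M=1\}$, the $\mathbb{O}^d$-symmetry of $\mathbb{A}$ and $h$, and weak nondegeneracy---are supplied by the preceding step; the quasi-convexity hypothesis on $f$ enters precisely at the $C^{1,1}$-estimate as in that paper. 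This yields the measurability and bound (\ref{0ma}) (with $1/F(I)$ in place of $mH_m^{-1/m}(I)$), the interior Lipschitz estimate (\ref{firstma}), and the interior $C^{1,1}$-estimate (\ref{secondma}); the Bellman equivalence from the first step then identifies $v$ as the unique strong solution to (\ref{rhessae}), with uniqueness coming from the standard comparison principle for Bellman equations. The main obstacle throughout is the careful treatment of the non-differentiability set $\Gamma\setminus\Gamma'$---density of $\Gamma'$, validity of the infimum-representation restricted to $\Gamma'$, and the uniform bound $F_{(I)}\ge F(I)$ on $\Gamma'$---but all three are consequences of concavity together with the $\mathbb{O}^d$-symmetry.
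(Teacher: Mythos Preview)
Your proposal is correct and follows essentially the same approach as the paper's own proof: the paper proves the analogues of Lemmas \ref{T}, \ref{equiveqn} and \ref{psi} (as Lemmas \ref{lm1}, \ref{lm2}, \ref{gpsi}) with exactly the ingredients you describe, then applies Theorems 2.1--2.3 of \cite{InteriorRegularityI} with the same substitutions. Your treatment of the non-differentiability set $\Gamma\setminus\Gamma'$---in particular the subdifferential argument forcing $I\in\Gamma'$ with $a^I=(1/d)I_{d\times d}$---is slightly more careful than the paper, which simply asserts ``Since $I\in\Gamma$, the weak non-degeneracy condition holds.''
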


\begin{remark}
Recall Remarks \ref{viscosity} and \ref{gammasubh} to interested readers.
\end{remark}

\subsection{Proof of Theorem {\ref{ghessthm}}}
Similarly to Section \ref{section2}, we prove Thereom \ref{ghessthm} by first rewriting the system (\ref{rhess}) as a Bellman equation with Dirichlet boundary condition and then apply the regularity results obtained in \cite{InteriorRegularityI}.

\begin{lemma}\label{lm1}
On $\Gamma'$, Define $T(\alpha)=\big[T^{ij}(\alpha)\big]_{d\times d}$, where $T^{ij}(\alpha)=F_{\gamma^{ij}}(\alpha)$. Then for each $\alpha\in\Gamma'$, we have 
\begin{enumerate}
\item$T(\alpha)\in  \overline{\mathbb S^d_+}$;
\item $\operatorname{tr}T(\alpha)=F_{(I)}(\alpha)\ge F(I)>0$;
\item $O\alpha O^*\in\Gamma'$ and $T(O\alpha O^*)=OT(\alpha)O^*$, $\forall O\in \mathbb O^d$.

\end{enumerate}

\end{lemma}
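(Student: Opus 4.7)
The plan is to follow the template of Lemma \ref{T} from Section \ref{section2}, substituting the specific hyperbolic-polynomial facts (Properties \ref{ppt1}--\ref{ppt4}) with their general counterparts from Assumptions \ref{a1}, \ref{a2} and Remark \ref{rmk50}: $\overline{\mathbb S^d_+}$-monotonicity, $\mathbb O^d$-invariance, $1$-homogeneity, and concavity of $F$.

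For (1), I would fix $\zeta\in\Rd$ and repeat the first computation of Lemma \ref{T}:
$$\zeta^* T(\alpha)\zeta \;=\; \operatorname{tr}\big[T(\alpha)\zeta\zeta^*\big] \;=\; \frac{d}{dt}F(\alpha+t\zeta\zeta^*)\Big|_{t=0} \;=\; F_{(\zeta\zeta^*)}(\alpha).$$
Since $\zeta\zeta^*\in\overline{\mathbb S^d_+}$, the $\overline{\mathbb S^d_+}$-monotonicity of $F$ noted in Remark \ref{rmk50} makes this derivative nonnegative. Part (3) is handled by the same chain as (\ref{repeat}): $\mathbb O^d$-invariance of both $F$ and $\Gamma$ makes $\gamma\mapsto O\gamma O^*$ a linear diffeomorphism of $\Gamma$ preserving $F$, so differentiability at $\alpha$ passes to $O\alpha O^*\in\Gamma'$, and for any $M\in\mathbb S^d$,
$$\operatorname{tr}\big[T(O\alpha O^*)M\big] \;=\; \frac{d}{dt}F(\alpha + tO^*MO)\Big|_{t=0} \;=\; \operatorname{tr}\big[OT(\alpha)O^*M\big],$$
which yields $T(O\alpha O^*) = OT(\alpha)O^*$.

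The substance is in (2). The chain rule gives $\operatorname{tr} T(\alpha) = F_{\gamma^{ij}}(\alpha)\delta_{ij} = F_{(I)}(\alpha)$ directly. Differentiating $F(t\alpha) = tF(\alpha)$ at $t=1$ (Euler's identity) yields $F_{(\alpha)}(\alpha) = F(\alpha)$, and the supporting-hyperplane inequality for the concave $F$ at $\alpha$, evaluated at $I\in\overline\Gamma$ (the inclusion following from ellipticity applied to $0\in\overline\Gamma$), reads
$$F(I) \;\le\; F(\alpha) + F_{(I)}(\alpha) - F_{(\alpha)}(\alpha) \;=\; F_{(I)}(\alpha),$$
which is the desired lower bound. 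The remaining point $F(I)>0$ reduces to showing $I\in\Gamma$, which I would establish by a rotation-averaging argument using all three parts of Assumption \ref{a1}: fix $M\in\Gamma$; by symmetry the orbit $\{OMO^*:O\in\mathbb O^d\}$ is a compact subset of $\Gamma$, by convexity its closed convex hull stays in $\Gamma$, and the Haar-average $\bar M=\int OMO^*\,dO$ lies in this hull and commutes with $\mathbb O^d$, hence equals $cI$ for some scalar $c$; finally $c>0$, because if $c<0$ then $-I\in\Gamma$ by the cone property, and the open segment from $-I\in\Gamma$ to $I\in\overline\Gamma$ would force $0\in\Gamma$, contradicting that $\Gamma$ is an open cone with vertex $0$. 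This verification $I\in\Gamma$ is the only place where the three pieces of Assumption \ref{a1} must interact, and is the main conceptual obstacle; the rest is a direct transcription of the corresponding arguments from Section \ref{section2}.
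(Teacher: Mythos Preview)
Your proof is correct. The route differs from the paper's in two ways worth noting.

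First, the paper handles (1) and (2) with a single superlinearity inequality: from concavity and $1$-homogeneity one has $\frac{F(\alpha+tS)-F(\alpha)}{t}\ge F(S)$ for $S\in\overline\Gamma$ and $t>0$, so letting $t\downarrow0$ gives $F_{(S)}(\alpha)\ge F(S)$; taking $S=\zeta\zeta^*$ yields (1) and $S=I$ yields $F_{(I)}(\alpha)\ge F(I)$. Your argument via monotonicity for (1) and supporting hyperplane plus Euler for (2) is of course equivalent, just unpacked into two separate steps.

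Second, your Haar-averaging argument to place $I\in\Gamma$ is sound but heavier than necessary. A shorter route: ellipticity plus $0\in\overline\Gamma$ gives $\overline{\mathbb S^d_+}\subset\overline\Gamma$; since $\mathbb S^d_+$ is open and $\Gamma$ is convex (so $\Gamma=\operatorname{int}\overline\Gamma$), one gets $\mathbb S^d_+\subset\Gamma$ directly, whence $I\in\Gamma$ and $F(I)>0$. This uses only Assumption~\ref{a1}(1),(2) and avoids symmetry altogether. The paper does not spell this step out, but that is presumably what is intended. Your averaging argument has the virtue of being self-contained and illustrating how all three parts of Assumption~\ref{a1} cooperate, but it is not the minimal path.
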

\begin{proof}
For any $\alpha\in\Gamma'$ and $S\in\overline{\mathbb S_+^d}$, by Assumption \ref{a2} (1) and (2),
$$\frac{F(\alpha+tS)-F(\alpha)}{t}\ge F(S).$$
Letting $S=\zeta\zeta^*$, for all $\zeta\in\Rd$, we obtain (1). When $S=I$, we obtain (2).

By Assumption \ref{a2} (3), we have $O\alpha O^*\in \Gamma'$ for each $\alpha\in\Gamma'$. Repeating (\ref{repeat}) for $F$ with $M\in\overline{\mathbb S_+^d}$, we obtain (3).
\end{proof}

Due to Lemma \ref{lm1}, $a^{ij}$ and $h$ in (\ref{gaalpha}) are well-defined on $\Gamma'$ and for each $\alpha\in\Gamma'$, $a(\alpha)\in \overline{\mathbb S_+^d}$ and $\tr a(\alpha)=1$.

\begin{lemma}\label{lm2}
For any $c\ge0$, the system
\begin{equation}\label{gequivhyp}
\left\{\begin{array}{rcl}
F(\gamma)&=&c\\
\gamma&\in&\overline\Gamma
\end{array}
\right.
\end{equation}
is equivalent to the  Bellman equation
\begin{equation}\label{gequivbell}
\inf_{\alpha\in\Gamma'}\big\{a^{ij}(\alpha)\gamma_{ij}-h(\alpha)c\big\}=0,
\end{equation}
where $a(\alpha)$ and $h(\alpha)$ are define in (\ref{gaalpha}).

\end{lemma}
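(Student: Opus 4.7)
The plan is to generalize the proof of Lemma \ref{equiveqn} to this setting, with the main new subtlety being that $F$ need not be differentiable everywhere on $\Gamma$ (only on the dense subset $\Gamma'$), whereas in Lemma \ref{equiveqn} the polynomial $H_m$ was smooth on all of $\Gamma$. The mechanism is identical: the concavity and $1$-homogeneity of $F$ produce a ``dual'' representation as an infimum of linear functionals, which is precisely the Bellman equation once we normalize by $\operatorname{tr}[T(\alpha)] = F_{(I)}(\alpha)$.

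For the direction (\ref{gequivhyp})$\Rightarrow$(\ref{gequivbell}), I would first establish the representation
\[
F(\gamma) = \inf_{\alpha \in \Gamma'}\bigl\{F_{\gamma^{ij}}(\alpha)\gamma_{ij}\bigr\}, \quad \forall \gamma \in \overline{\Gamma},
\]
by combining Euler's homogeneous function theorem $F_{\gamma^{ij}}(\alpha)\alpha_{ij} = F(\alpha)$ with the supporting hyperplane inequality $F(\gamma) \le F_{\gamma^{ij}}(\alpha)(\gamma_{ij}-\alpha_{ij}) + F(\alpha)$ valid for concave $F$ at any differentiability point $\alpha$; and for the reverse inequality I use that when $\gamma \in \Gamma'$ the infimum is attained at $\alpha = \gamma$, while for $\gamma \in \overline{\Gamma}\setminus\Gamma'$ I approximate by $\gamma_n \in \Gamma'$ using density of $\Gamma'$ in $\Gamma$ (and approach $\partial\Gamma$ points by $\gamma + \epsilon I$, which lies in $\Gamma$ by Assumption \ref{a1} (1)). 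Dividing by $F_{(I)}(\alpha) > 0$ (Lemma \ref{lm1} (2)) and substituting $F(\gamma) = c$ yields $\inf_{\alpha \in \Gamma'}\{a^{ij}(\alpha)\gamma_{ij} - h(\alpha)c\} = 0$ for $\gamma \in \Gamma'$; for $\gamma \in \partial\Gamma$ (where $c = 0$) the limiting argument from the proof of Lemma \ref{equiveqn} carries over, since $F(\gamma + \epsilon I)/F_{(I)}(\gamma + \epsilon I) \to 0$ as $\epsilon \to 0^+$ by continuity of $F$ and the lower bound $F_{(I)} \ge F(I) > 0$.

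For the reverse direction (\ref{gequivbell})$\Rightarrow$(\ref{gequivhyp}), I would proceed in two stages. First, to establish $\gamma \in \overline{\Gamma}$: from the representation above we read off
\[
\overline{\Gamma} = \{\gamma \in \mathbb{S}^d : F_{\gamma^{ij}}(\alpha)\gamma_{ij} \ge 0, \ \forall \alpha \in \Gamma'\} = \{\gamma \in \mathbb{S}^d : a^{ij}(\alpha)\gamma_{ij} \ge 0, \ \forall \alpha \in \Gamma'\},
\]
and since $c \ge 0$, $h(\alpha) > 0$, the Bellman equation (\ref{gequivbell}) implies $a^{ij}(\alpha)\gamma_{ij} \ge h(\alpha)c \ge 0$ for all $\alpha \in \Gamma'$, giving $\gamma \in \overline{\Gamma}$. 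Second, to establish $F(\gamma) = c$: observe $\lim_{t\to +\infty} F(\gamma + tI) = +\infty$ (since by monotonicity $F(\gamma + tI) \ge F(tI) = tF(I)$) and that there exists $t_0 \le 0$ with $\gamma + t_0 I \in \partial \Gamma$ (so $F(\gamma + t_0 I) = 0$); by the intermediate value theorem there is $t_1 \in [t_0, \infty)$ with $F(\gamma + t_1 I) = c$. Applying the already-proved forward direction to $\gamma + t_1 I$ and subtracting the additive constant $t_1 a^{ij}(\alpha)\delta_{ij} = t_1$ (since $\operatorname{tr} a(\alpha) = 1$) gives $\inf_{\alpha \in \Gamma'}\{a^{ij}(\alpha)\gamma_{ij} - h(\alpha)c\} = -t_1$. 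Comparing with (\ref{gequivbell}) forces $t_1 = 0$, so $F(\gamma) = c$.

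The main obstacle I anticipate is the careful handling of non-differentiability: ensuring the infimum representation of $F$ remains valid when only $\alpha \in \Gamma'$ is allowed, and when $\gamma$ itself lies outside $\Gamma'$ or on $\partial\Gamma$. The key tool here is that a concave function on an open convex set is differentiable on a dense subset (in fact, off a set of measure zero), so approximations from $\Gamma'$ suffice both to realize the infimum and to pass to limits; the symmetry and ellipticity Assumptions \ref{a1}--\ref{a2} together with Lemma \ref{lm1} guarantee that all the normalizations are well-behaved throughout.
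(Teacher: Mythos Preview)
Your proposal is correct and follows essentially the same approach as the paper's proof: represent $F$ as an infimum of its supporting linear functionals over $\Gamma'$, normalize by $F_{(I)}(\alpha)$, and for the reverse direction use the characterization of $\overline\Gamma$ together with the intermediate-value/translation argument along $I$. The only cosmetic difference is that the paper treats all of $\overline\Gamma\setminus\Gamma'$ uniformly via the single approximating sequence $\gamma+\epsilon_n I\in\Gamma'$ (covering both boundary points and interior non-differentiability points at once), whereas you separate out $\partial\Gamma$; your limiting argument applies verbatim to $\gamma\in\Gamma\setminus\Gamma'$ as well, so there is no gap.
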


\begin{proof}[Proof of (\ref{gequivhyp})$\Rightarrow$(\ref{gequivbell})] By Assumption \ref{a2} (1) and (2), on $\Gamma$,
\begin{equation}\label{Fgamma}
F(\gamma)=\inf_{\alpha\in\Gamma'}\big\{F_{\gamma^{ij}}(\alpha)\gamma_{ij}\big\},
\end{equation}
which implies that (\ref{gequivhyp}) is equivalent to 
$$\inf_{\alpha\in\Gamma'}\big\{F_{\gamma^{ij}}(\alpha)\gamma_{ij}-c\big\}=0.$$

For each $\gamma\in\Gamma'$, the infimum of the expression in the bracket is attained at $\alpha=\gamma$, so we obtain
$$F_{\gamma^{ij}}(\gamma)\gamma_{ij}=c.$$
By Lemma \ref{T} (3), for each $\gamma\in\Gamma$, we can divide both sides by $\operatorname{tr}T(\gamma)$,  so
\begin{align*}
a^{ij}(\gamma)\gamma_{ij}=h(\gamma)c.
\end{align*}
Therefore for each $\gamma\in\Gamma'$, Equation (\ref{gequivbell}) is true since again the infimum there is attained at $\alpha=\gamma$.

For each $\gamma\in\overline\Gamma\setminus\Gamma'$, there exists a sequence $\{\epsilon_n\}_{n=1}^{\infty}$, satisfying $\epsilon_n>0$, $\lim_{n\rightarrow\infty}\epsilon_n=0$ and $\gamma+\epsilon_nI\in\Gamma'$ for each $n$. Therefore,
\begin{align*}
a^{ij}(\gamma+\epsilon_n I)\gamma_{ij}-h(\gamma+\epsilon_n I)c=&\frac{F_{\gamma^{ij}}(\gamma+\epsilon_n I)\gamma_{ij}-c}{\operatorname{tr}\big[F_{\gamma^{ij}}(\gamma+\epsilon_n I)\big]}\\
=&\frac{F_{\gamma^{ij}}(\gamma+\epsilon_n I)(\gamma_{ij}+\epsilon_n\delta_{ij}-\epsilon_n\delta_{ij})-c}{\operatorname{tr}\big[F_{\gamma^{ij}}(\gamma+\epsilon_n I)\big]}\\
=&\frac{F(\gamma+\epsilon_n I)-c}{\operatorname{tr}\big[F_{\gamma^{ij}}(\gamma+\epsilon_n I)\big]}-\epsilon_n.
\end{align*}
By Lemma \ref{lm1} (2), we obtain
$$\lim_{n\rightarrow\infty}\big[a^{ij}(\gamma+\epsilon_n I)\gamma_{ij}-h(\gamma+\epsilon_n I)c\big]=0,$$
which implies that Equation (\ref{gequivbell}) is true on $\overline\Gamma\setminus\Gamma'$.

\end{proof}

\begin{proof}[Proof of (\ref{gequivbell})$\Rightarrow$(\ref{gequivhyp})]
It is almost the same as the proof of (\ref{equivbell})$\Rightarrow$(\ref{equivhyp}) in Section \ref{section2}. 

We first establish the relation $\gamma\in\overline\Gamma$. By (\ref{Fgamma}),
\begin{align*}
\overline\Gamma=&\{\gamma\in\mathbb S^d: F_{\gamma^{ij}}(\alpha)\gamma_{ij}\ge0,\forall \alpha\in\Gamma'\}\\
=&\{\gamma\in\mathbb S^d: a^{ij}(\alpha)\gamma_{ij}\ge0,\forall \alpha\in\Gamma'\}.
\end{align*}
On the other hand, since $c\ge0$, $h(\alpha)>0$ on $\Gamma'$, given any $\gamma$ satisfying (\ref{gequivbell}), we have
$$a^{ij}(\alpha)\gamma_{ij}\ge0, \quad\forall\alpha\in\Gamma'.$$
Thus the relation is established.

Then we verify the equation $F(\gamma)=c$. Given any $\gamma\in\overline\Gamma$, we notice that 
$$\lim_{t\rightarrow+\infty}F(tI+\gamma)=\lim_{t\rightarrow+\infty}tF(I+t^{-1}\gamma)=+\infty,$$
and that there exists $t_0\le0$, such that $t_0I+\gamma\in\partial\Gamma$ and $F(t_0I+\gamma)=0$. By the continuity of $\phi(t)=F(tI+\gamma)$, for each $c\ge0$, there exists a scalar $t_1\in [t_0,\infty)$, such that $t_1I+\gamma\in\overline\Gamma$ and $F(t_1I+\gamma)=c$.

From the proof of (\ref{gequivhyp})$\Rightarrow$(\ref{gequivbell}), we know that
\begin{align*}
\inf_{\alpha\in\Gamma'}\Big\{a^{ij}(\alpha)(t_1\delta_{ij}+\gamma_{ij})-h(\alpha)c\Big\}=&0\\
\inf_{\alpha\in\Gamma'}\Big\{a^{ij}(\alpha)\gamma_{ij}-h(\alpha)c\Big\}=&-t_1.
\end{align*}
Comparison with (\ref{gequivbell}) gives $t_1=0$, so Equation (\ref{gequivhyp}) holds.
\end{proof}

\begin{lemma}\label{gpsi}
For sufficiently large constant $C$, the global barrier $\psi$ given in Definition \ref{globalbarrier} satisfies the following conditions:
\begin{enumerate}
\item $D=\{\psi>0\}$;
\item $|\psi_x|\ge 1$ on $\partial D$;
\item $\displaystyle\sup_{\alpha\in\Gamma'}\big(a_{ij}(\alpha)\psi_{x^ix^j}\big)\le-1$ in $\bar D$.
\end{enumerate}
\end{lemma}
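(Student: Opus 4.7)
The proof will follow the same template as Lemma \ref{psi}, replacing the hyperbolic polynomial $H_m$ with the general function $F$ and invoking Lemma \ref{lm2} in place of Lemma \ref{equiveqn}. Set $\psi = -C\rho$ where $\rho$ is the global defining function of Definition \ref{globalbarrier} (whose existence follows from Assumption \ref{a3} via Theorem 5.12 in \cite{MR2487853}). Properties (1) and (2) are immediate from the corresponding properties of $\rho$ listed in Definition \ref{globalbarrier}, after possibly rescaling $C$ so that $C|\rho_x| \ge 1$ on $\partial D$.

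For (3), the plan is to exploit condition (\ref{zai}): for some $\epsilon > 0$ and $R > 0$, the matrix $\gamma := [C(\rho - \epsilon|x|^2/2)]_{xx} = C\rho_{xx} - C\epsilon I$ belongs to $\overline\Gamma$ whenever $C \ge R$. By Assumption \ref{a2}, we then have $F(\gamma) \ge 0$. Applying Lemma \ref{lm2} with $c := F(\gamma) \ge 0$, the infimum expression
\[
\inf_{\alpha \in \Gamma'} \bigl\{a^{ij}(\alpha)\gamma_{ij} - h(\alpha)c\bigr\} = 0
\]
gives $a^{ij}(\alpha)\gamma_{ij} \ge h(\alpha) c \ge 0$ for every $\alpha \in \Gamma'$. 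Expanding and using $\operatorname{tr} a(\alpha) = 1$ (by Lemma \ref{lm1}), this reads
\[
C\, a^{ij}(\alpha)\rho_{x^i x^j} - C\epsilon \ge 0, \quad \text{i.e.,} \quad a^{ij}(\alpha)\rho_{x^i x^j} \ge \epsilon.
\]
Consequently $a^{ij}(\alpha)\psi_{x^i x^j} = -C\,a^{ij}(\alpha)\rho_{x^i x^j} \le -C\epsilon$ uniformly in $\alpha \in \Gamma'$ and $x \in \bar D$.

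Finally, choose $C \ge \max(R, 1/\epsilon)$ (and large enough to secure (2) as noted above). Then $\sup_{\alpha \in \Gamma'} a^{ij}(\alpha)\psi_{x^i x^j} \le -1$ on $\bar D$, which is (3). I do not anticipate any serious obstacle; the only point requiring a touch of care is verifying that the chain of equivalences in Lemma \ref{lm2} is applied to $\gamma = C\rho_{xx} - C\epsilon I$, which lies in $\overline\Gamma$ but possibly not in $\Gamma'$, and the Bellman inequality still holds on all of $\overline\Gamma$ precisely because Lemma \ref{lm2} was established on the full closed cone.
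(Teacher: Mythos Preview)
Your proposal is correct and follows exactly the approach indicated by the paper, whose proof simply reads ``Repeat the proof of Lemma~\ref{psi} with $\Gamma$ replaced by $\Gamma'$.'' You have carried out precisely that repetition, invoking Lemma~\ref{lm2} in place of Lemma~\ref{equiveqn} and using $\operatorname{tr} a(\alpha)=1$ from Lemma~\ref{lm1}; the only difference is that you have spelled out the details more explicitly than the paper does.
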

\begin{proof}
Repeat the proof of Lemma \ref{psi} with $\Gamma$ replaced by $\Gamma'$.
\end{proof}

With the above three lemmas, we are ready to prove Theorem \ref{ghessthm}.

\begin{proof}[Proof of Theorem \ref{ghessthm}]
We apply Theorems 2.1, 2,2 and 2.3 in \cite{InteriorRegularityI} with
$$A=\Gamma',\quad a^\alpha=\Bigg(\frac{F_{\gamma^{ij}}(\alpha)}{\operatorname{tr}\big[F_{\gamma^{ij}}(\alpha)\big]}\Bigg)_{1\le i,j\le d},\quad \sigma^\alpha=\sqrt{2a^\alpha},$$
$$b^\alpha=0,\quad c^\alpha=0,\quad f^\alpha(x)=\frac{f(x)}{F_{(I)}(\alpha)},\quad g(x)=-\varphi(x).$$

Based on these substitutions, it is not hard to see that $v$ defined by (\ref{grp}) equals $-v$ defined by (2.3) in \cite{InteriorRegularityI}.

By Lemma \ref{gpsi}, Assumption 2.1 in \cite{InteriorRegularityII} holds. By Lemma \ref{lm1} (3), Assumption 2.2 in \cite{InteriorRegularityI} holds. Since $I\in \Gamma$, the weak non-degeneracy condition holds. Therefore we can apply Theorems 2.1, 2.2 and 2.3 in \cite{InteriorRegularityI} to obtain Theorem \ref{ghessthm}.

\end{proof}

\begin{remark}\label{explain} We explain here Remark \ref{rmk53}. Throughout the proof of Theorem \ref{ghessthm} in this subsection, we notice that the properties of $F$ needed in the proof and derived from the homogeneity condition are the following.
\begin{enumerate}
\item[(i)]$F_{(I)}(M)\ge\nu$ at each $M\in\Gamma$ where $F$ is differentiable in the direction of $I$, with $\nu$ a  positive constant;
\item[(ii)]$\displaystyle\lim_{t\rightarrow+\infty}F(M+tI)=+\infty, \forall M\in\partial \Gamma.$
\end{enumerate}

Observe that (i) implies (ii), therefore, Condition (1') in Remark \ref{rmk53} can substitute Condition (1) in Assumption \ref{a2}. In this situation, the Bellman equation equivalent to (\ref{gequivhyp}) is
$$\inf_{\alpha\in\Gamma'}\big\{\hat a^{ij}(\alpha)\gamma_{ij}- \hat h(\alpha)(F_{(\alpha)}(\alpha)-F(\alpha)+c)\big\}=0,$$
where
\begin{align*}
\hat a^{ij}(\alpha)=&\frac{F_{\gamma^{ij}}(\alpha)}{\sqrt{(F_{(I)}(\alpha))^2+(F_{(\alpha)}(\alpha)-F(\alpha))^2}},\\
\hat h(\alpha)=&\frac{1}{\sqrt{(F_{(I)}(\alpha))^2+(F_{(\alpha)}(\alpha)-F(\alpha))^2}}.
\end{align*}
Note that if $F$ is $1$-homogeneous, then this Bellman equation reduces to (\ref{gequivbell}).

On the other hand, $\overline{\mathbb S_+^d}$-monotonicity of $F$ and (ii), along with concavity of $F$ imply that $F_{(I)}(M)>0$ at each $M\in\Gamma$ where $F$ is differentiable in the direction of $I$. Indeed, if there exists $M\in\Gamma$ at which $F_{(I)}(M)=0$, then by $\overline{\mathbb S_+^d}$-monotonicity and concavity of $F$, $F(M+tI)\equiv F(M)$ for all $t\ge0$, which is a contradiction to (ii). Then in this situation, we can normalize the Bellman equation equivalent to (\ref{gequivhyp}) as
$$\inf_{\alpha\in\Gamma'}\big\{\hat a^{ij}(\alpha,c)\gamma_{ij}-\hat h(\alpha,c)\big\}=0,$$
where
\begin{align*}
\hat a^{ij}(\alpha,c)=&\frac{F_{\gamma^{ij}}(\alpha)}{\sqrt{(F_{(I)}(\alpha))^2+(F_{(\alpha)}(\alpha)-F(\alpha)+c)^2}},\\
\hat h(\alpha,c)=&\frac{F_{(\alpha)}(\alpha)-F(\alpha)+c}{\sqrt{(F_{(I)}(\alpha))^2+(F_{(\alpha)}(\alpha)-F(\alpha)+c)^2}}.
\end{align*}
Therefore, Condition (1'') in Remark \ref{rmk53} can substitute Condition (1) in Assumption \ref{a2}.
\end{remark}

\section{Complex Hessian equations under general settings}\label{section6}
This section is the complex counterpart of Section \ref{section5}. 

\subsection{Statement of the results}
Our setup here is the following. We consider the Dirichlet problem for the general complex Hessian equation:
\begin{equation}\label{chess}
\left\{\begin{array}{rcll}
G(u_{z\bar z})&=&f  &\text{in }D\\ 
u_{z\bar z}&\in&\overline\Theta&\text{in }D\\
u&=&\varphi  &\text{on }\partial D,
\end{array}
\right.
\end{equation}
where $D$ is a bounded smooth domain in $\mathbb C^d$ ($d\ge2$), $f: D\rightarrow [0,\infty)$ and $\varphi: \partial D\rightarrow \mathbb R$ are bounded and Borel measurable functions, and $\Theta$, $G$ and $D$ satisfy the following Assumptions \ref{ca1}, \ref{ca2} and \ref{ca3}, respectively.

\begin{assumption}\label{ca1} $\Theta$ is a nonempty proper open cone in $\mathbb H^d$ with vertex at the origin, satisfying the following conditions:
\begin{enumerate}
\item (Ellipticity) $\overline\Theta+\overline{\mathbb H^d_+}\subset\overline\Theta$.
\item (Convexity) $\Theta$ is convex.
\item (Symmetry) $U \Theta \bar U^*=\Theta,\forall U\in\mathbb U^d$.
\end{enumerate}
\end{assumption}

\begin{assumption} \label{ca2}  $G$ is a real-valued function defined on $\overline{\Theta}$ satisfying 
$$G|_{\partial \Theta}=0,\quad G|_{\Theta}>0$$ 
and the following conditions:
\begin{enumerate}
\item (Homogeneity) $G(tM)=tG(M), \forall t\ne0, M\in \overline\Theta$.
\item (Concavity) $G$ is concave.
\item (Symmetry) $G(UM\bar U^*)=G(M), \forall M\in \Theta,U\in\mathbb U^d$.
\end{enumerate}
\end{assumption}

\begin{assumption}\label{ca3} $D$ is
strictly $\Theta$-pseudoconvex. (cf. Assumption \ref{chypbdry})
\end{assumption}

This setting generally covers the problem studied in Section \ref{section3} and includes more interesting examples to be discussed in Section \ref{secexamore}.

\begin{remark}
Remarks similar to Remarks \ref{rmk50} $\sim$ \ref{rmk53} can also be made in complex cases.
\end{remark}

To introduce the stochastic representation of the solution to (\ref{chess}), we first define 
$$\Theta'=\{\theta\in\Theta: G \mbox{ is differentiable at }\theta\}$$
and on $\Theta'$
\begin{equation}\label{cgaalpha}
b^{ij}(\theta)=\frac{G_{\theta^{ij}}(\theta)}{\operatorname{tr}\big[G_{\theta^{ij}}(\theta)\big]},\quad l(\theta)=\frac{1}{G_{(I)}(\theta)}.
\end{equation}
(By an argument similar to that in Lemma \ref{lm1}, we know that on $\Theta'$, the hermitian matrix $b(\theta):=(b^{ij}(\theta))_{1\le i,j\le d}$ is semi-positive and $\operatorname{tr}[G_{\theta^{ij}}(\theta)]=1/G_{(I)}(\theta)$ is strictly positive, so (\ref{cgaalpha}) is well-defined.)

Then we let $W_t$ be the normalize complex Wiener process of dimension $d$ and $\mathfrak B$ be the set of progressively-measurable processes $\beta=(\beta_t)_{t\ge0}$ with values in $\Theta$ for all $t\ge0$. On $\bar D$, define
\begin{equation}\label{cgrp}
v(z)=\inf_{\beta\in\mathfrak B}E\bigg[\varphi\big(z^{\beta,z}_{\tau^{\beta,z}}\big)-\int_0^{\tau^{\beta,z}}l(\beta_t)f\big(z_t^{\beta,z}\big)dt\bigg].
\end{equation}
with
\begin{equation*}
z_t^{\beta,z}=z+\int_0^t\sqrt{b(\beta_s)}dW_s,  \ \forall \beta\in\mathfrak B,
\end{equation*}
and 
$$\tau^{\beta,z}=\inf\{t\ge0:z_t^{\beta,z}\notin D\}.$$

\begin{theorem}\label{cghessthm}
$v$ given by (\ref{cgrp}) is Borel measurable and bounded, and it satisfies the estimate (\ref{c0ma}) with $mG_m^{-1/m}(I)$ replaced by $1/G(I)$.

 If $f,\varphi\in C^{0,1}(\bar D)$, then $v\in C^{0,1}_{loc}(D)\cap C(\bar D)$, and the first (generalized) derivatives satisfy the estimate (\ref{cfirstma}).

 If $f,\varphi\in C^{1,1}(\bar D)$, then $v\in C^{1,1}_{loc}(D)\cap C^{0,1}(\bar D)$ and the second (generalized) derivatives satisfy the estimate (\ref{csecondma}). Meanwhile, $v$ is the unique strong solution to (\ref{chess}), i.e.
 \begin{equation}\label{chessae}
\left\{\begin{array}{rcll}
G(u_{z\bar z})&=&f  &\text{a.e. in }D\\ 
u_{z\bar z}&\in&\overline\Theta&\text{a.e. in }D\\
u&=&\varphi  &\text{on }\partial D.
\end{array}
\right.
\end{equation}

\end{theorem}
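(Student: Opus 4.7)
The plan is to mimic the three-step strategy used for Theorem \ref{cdrhp} in Subsection \ref{subsection32}, replacing the hyperbolic polynomial $G_m$ by the general function $G$. First, I would prove a complex analogue of Lemma \ref{lm1}: for each $\theta\in\Theta'$, the matrix $b(\theta)$ in (\ref{cgaalpha}) lies in $\overline{\mathbb H_+^d}$, its trace equals $1$ (after normalization), and $G_{(I)}(\theta)\ge G(I)>0$. The positivity of $b(\theta)$ follows from $\overline{\mathbb H_+^d}$-monotonicity (an automatic consequence of Assumptions \ref{ca1} and \ref{ca2}, by the argument of Remark \ref{rmk50}), while Assumption \ref{ca2}(3) gives $b(U\theta\bar U^*)=Ub(\theta)\bar U^*$ for all $U\in\mathbb U^d$. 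Then a complex analogue of Lemma \ref{lm2} shows that the system $G(\gamma)=c$, $\gamma\in\overline\Theta$ is equivalent to the Bellman equation
\begin{equation*}
\inf_{\theta\in\Theta'}\big\{b^{ij}(\theta)\gamma_{ij}-l(\theta)c\big\}=0,
\end{equation*}
using the concave/$1$-homogeneous representation $G(\gamma)=\inf_{\theta\in\Theta'}G_{\gamma^{ij}}(\theta)\gamma_{ij}$, plus the approximation trick $\gamma\mapsto\gamma+\epsilon I$ for boundary points of $\Theta$. Exactly as in the real case, Assumption \ref{ca3} yields a global barrier $\psi$ verifying the three properties listed just before Theorem \ref{cdrhp}.

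Next, I would transfer the complex Bellman equation to a real Bellman equation on $\mathbb R^{2d}$ via the homomorphism
\begin{equation*}
\Phi:\mathbb C^{d\times d}\to\mathbb R^{2d\times 2d},\qquad \beta\mapsto\begin{pmatrix}\operatorname{Re}\beta&\operatorname{Im}\beta\\-\operatorname{Im}\beta&\operatorname{Re}\beta\end{pmatrix},
\end{equation*}
following Step 1 of Subsection \ref{subsection32}. Under $\Phi$, the complex Wiener process $W_t$ is realified to a real Wiener process, and $v(z)$ becomes the value function of a stochastic control problem on $\mathbb R^{2d}$ with diffusion coefficient $\sqrt{2a(\alpha)}$, where $\alpha=\Phi\beta$ and $4a(\alpha)=\Phi(b(\beta))$. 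I would then identify the image control set $\mathbb A=\Phi\{b(\beta):\beta\in\Theta'\}$; although in general $O\mathbb A O^*\ne \mathbb A$ for arbitrary $O\in\mathbb O^{2d}$, the unitary invariance from Assumption \ref{ca1}(3) gives $(\Phi U)\mathbb A(\Phi U)^*=\mathbb A$ for all $U\in\mathbb U^d$, which is sufficient for the rotational-invariance requirement of the interior estimates in \cite{InteriorRegularityI}.

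The main obstacle, and the step where I expect the real work to happen, is verifying the analogue of Lemma 7.1 in \cite{InteriorRegularityI} in this complex setting. Concretely, one must construct a real skew-symmetric matrix field $P(x,\xi)$ on $D\times\mathbb R^{2d}$ together with a real scalar $\varrho(x,\xi)$ such that $\psi_{(\xi)(\beta^k)}+\varrho\,\psi_{(\beta^k)}+\psi_{(P\beta^k)}=0$ on the appropriate set. Following Step 2 of Subsection \ref{subsection32}, I would define
\begin{equation*}
\chi(z,\xi)=-\frac{\psi_{\bar z^k}(\psi_{z^k})_{(\xi)}}{|\psi_{\bar z}|^2},\quad R(z,\xi)=\Bigl(\frac{(\psi_{z^k})_{(\xi)}\psi_{\bar z^j}-(\psi_{\bar z^j})_{(\xi)}\psi_{z^k}}{|\psi_{\bar z}|^2}\Bigr)_{j,k},
\end{equation*}
verify the identity $(\psi_{\bar z})_{(\xi)}+R\psi_{\bar z}+\chi\psi_{\bar z}=0$, split $\chi=\varrho+i\varkappa$ into real and imaginary parts, and set $Q=R+i\varkappa I$, $P=\Phi Q$. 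The point is that $Q$ is skew-Hermitian, so $P$ is skew-symmetric and $e^{P}\in\Phi\mathbb U^d$, which makes the unitary invariance (rather than the full orthogonal invariance) exactly the right hypothesis. Once this is in place, the other properties demanded in Lemma 7.1 in \cite{InteriorRegularityI} are algebraic consequences of the construction.

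Finally, in Step 3, I would write down the associated dynamic programming equation of the realified problem as
\begin{equation*}
\inf_{\beta\in\Theta'}\Bigl\{\tfrac{1}{4}\operatorname{tr}\bigl[(\Phi b(\beta))v_{xx}\bigr]-l(\beta)f\Bigr\}=0,
\end{equation*}
and use the identity $\operatorname{tr}[(\Phi b(\beta))v_{xx}]=4\operatorname{tr}[b(\beta)v_{z\bar z}]$ to recognize this as the complex Bellman equation $\inf_{\beta\in\Theta'}\{b^{ij}(\beta)v_{z^i\bar z^j}-l(\beta)f\}=0$. Combining with the complex analogue of Lemma \ref{lm2}, this gives the equivalence with (\ref{chessae}), proving uniqueness in the stated class, while the bound (\ref{c0ma}), the Lipschitz estimate (\ref{cfirstma}) and the $C^{1,1}$ estimate (\ref{csecondma}) follow directly from applying Theorems 2.1, 2.2 and 2.3 of \cite{InteriorRegularityI} to the realified problem.
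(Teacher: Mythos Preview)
Your proposal is correct and follows essentially the same three-step strategy as the paper's proof. The only content the paper adds that you leave implicit is an explicit chain-rule computation in Step~1 verifying that $\Phi\bigl(G_{\theta^{ij}}(\beta)\bigr)=F_{\gamma^{ij}}(\alpha)$ for $F(\alpha):=G(\Phi^{-1}\alpha)$, and hence $\Phi(b(\beta))=4a(\alpha)$; the paper then literally writes ``repeat Step~2 in the proof of Theorem~\ref{cdrhp}'' and ``repeating the proof of Lemma~\ref{lm2}'' for the remaining steps, which is exactly what you spell out.
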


\subsection{Proof of Theorem {\ref{cghessthm}}} We still follow the three-step procedure in Subsection \ref{subsection32}.

\begin{proof}[Proof of Theorem \ref{cghessthm}]

To rewrite the payoff function in (\ref{cgrp}) as a function on $\mathbb R^{2d\times 2d}$, we first notice that
$$\Phi( z_t^{\beta,z})=\Phi z+\int_0^t\frac{1}{\sqrt 2}\Big(\Phi\sqrt{b({\beta_s})}\Big)dw_s=\Phi z+\int_0^t\frac{1}{\sqrt 2}\Big(\sqrt{\Phi b({\beta_s})}\Big)dw_s,$$
where the homomorphism $\Phi$ was defined at the beginning of the proof of Theorem \ref{cdrhp} and $w_t$ is a $2d$-dimensional Wiener process. Let
$$\alpha=\Phi\beta, \qquad\mathfrak A=\Phi\mathfrak B=\{\Phi\beta:\beta\in\mathfrak B\},$$
$$\Phi\Theta'=\{\Phi\beta:\beta\in\Theta'\}.$$
On $\Phi\Theta'$, define $F(\alpha)=G(\Phi^{-1}\alpha)$. Then for each $\beta\in\Theta'$ and $1\le i,j\le d$, we have
\begin{align*}
G_{\theta^{ij}}(\beta)=&\frac{\partial}{\partial\theta^{ij}}F(\Phi\beta)\\
=&F_{\gamma^{kl}}(\Phi\beta)\frac{\partial}{\partial\theta^{ij}}(\Phi\beta)^{kl}\\
=&\frac{1}{2}F_{\gamma^{ij}}(\Phi\beta)+\frac{1}{2}F_{\gamma^{i+d,j+d}}(\Phi\beta)+\frac{i}{2}F_{\gamma^{i,j+d}}(\Phi\beta)-\frac{i}{2}F_{\gamma^{i+d,j}}(\Phi\beta),
\end{align*}
Since $(F_{\gamma^{ij}}(\Phi\beta))_{1\le i,j\le 2d}\in \Phi\mathbb H^d$, 
$$F_{\gamma^{ij}}(\Phi\beta)=F_{\gamma^{i+d,j+d}}(\Phi\beta),\quad F_{\gamma^{i,j+d}}(\Phi\beta)=-F_{\gamma^{i+d,j}}(\Phi\beta).$$
Therefore, we obtain that for each $\beta\in\Theta'$,
$$\Phi G_{\theta^{ij}}(\beta)=F_{\gamma^{ij}}(\alpha),$$
which implies that
$$\Phi(b(\beta))=\bigg(\frac{2F_{\gamma^{ij}}(\alpha)}{\operatorname{tr}\big[F_{\gamma^{ij}}(\alpha)\big]}\bigg)_{1\le i,j\le 2d}:=4a(\alpha).$$
Therefore we can rewrite (\ref{cgrp}) as
\begin{equation}\label{vrealized}
v(x)=\inf_{\alpha\in\mathfrak A}E\bigg[\varphi(x^{\alpha,x}_{\tau^{\alpha,x}})-\int_0^{\tau^{\alpha,x}}f^{\alpha_t}(x^{\alpha,x}_t)dt\bigg],
\end{equation}
where
$$x_t^{\alpha,x}=x+\int_0^t\sqrt{2a(\alpha_s)}dw_s,$$
$$f^{\alpha}(x)=\frac{2f(x)}{F_{(I)}(\alpha)}.$$
Then we repeat Step 2 in the proof of Theorem \ref{cdrhp} to obtain all regularity results on $v(x)$ defined by (\ref{vrealized}). 

It remains to verify that the associated dynamic programing equation is equivalent to the complex Hessian equation. Firstly, the associated dynamic programing equation of $v(x)$ is the real Bellman equation
\begin{equation*}
\inf_{\alpha\in \Phi\Theta}\Big\{\operatorname{tr}\big[a(\alpha)v_{xx}\big]-f^{\alpha}(x)\Big\}=0,
\end{equation*}
which is equivalent to 
\begin{equation*}
\inf_{\beta\in \Theta}\Big\{\frac{1}{4}\operatorname{tr}\big[(\Phi b(\beta))v_{xx})\big]-l(\beta)f\Big\}=0.
\end{equation*}
To write down the corresponding complex Bellman equation, it suffices to notice that
$$\operatorname{tr}\big[(\Phi b(\beta))v_{xx}\big]=4\operatorname{tr}\big[b(\beta)v_{z\bar z}\big].$$
Therefore the complex Bellman equation is
\begin{equation}\label{cgbellmanmab}
\inf_{\beta\in \Theta}\Big\{\operatorname{tr}\big[b(\beta)v_{z\bar z}\big]-l(\beta)f\Big\}=0.
\end{equation}
Lastly, the equivalence between (\ref{cgbellmanmab}) and the complex Hessian equation in (\ref{chess}) with the relation $v_{z\bar z}\in\overline\Theta$ can be verified by repeating the proof of Lemma \ref{lm2}. 
\end{proof}

\section{Further examples of degenerate Hessian equations}\label{secexamore}
The point of this section is to show that there are many  applications of the main results obtained in Sections \ref{section5} and \ref{section6}. We in particular study the representation and regularity for the  Dirichlet problem for three interesting types of degenerate Hessian equations.

Throughout this section $D$ is supposed to be a bounded smooth domain (in $\Rd$ or $\mathbb C^d$, with $d\ge2$), $f: D\rightarrow [0,\infty)$ and $\varphi: \partial D\rightarrow \mathbb R$ are assumed to be bounded and Borel measurable functions.

\subsection{Minimal/Maximal eigenvalue equations}
Given an $m$-th degree hyperbolic polynomial $H_m$ over $\mathbb S^d$ with respect to $I$, we denote by $\lambda_{\min}[H_m](M)$ (resp. $\lambda_{\max}[H_m](M)$) the minimal (resp. maximal) eigenvalue of $H_m(M)$. It well-known that $\lambda_{\min}[H_m]$ is superlinear, i.e. concave and $1$-homogeneous. (See Theorem 2, \cite{MR0113978} or Proposition 3.4, \cite{MR3055586}). 
Therefore we can apply Theorem \ref{rhess} with $F=\lambda_{\min}[H_m]$ and $\Gamma=\Gamma_{H_m}$ to study the Dirichlet problem
\begin{equation}\label{mineigen}
\left\{\begin{array}{rcll}
\lambda_{\min}[H_m](u_{xx})&=&f  &\text{in }D\\ 
u_{xx}&\in&\overline\Gamma_{H_m}&\text{in }D\\
u&=&\varphi  &\text{on }\partial D,
\end{array}
\right.
\end{equation}
if we suppose that $H_m$ is symmetric (cf. Assumption \ref{a2} (4)), $\Gamma_{H_m}$ is elliptic (cf. Assumption \ref{a1} (1)) and $D$ is strictly $\Gamma_{H_m}$-convex (cf. Assumption \ref{a3}).
\begin{example}
In particular, $\lambda_{\min}[\det](u_{xx})$ is the smallest usual eigenvalue of the Hessian matrix of $u$, and for $1\le k\le d$, $\lambda_{\min}[\mu_k](u_{xx})$ is the arithmetic mean of the first $k$ smallest eigenvalues of $u_{xx}$, i.e.
$$\lambda_{\min}[\mu_k](u_{xx})=\frac{\lambda_1(u_{xx})+\cdots+\lambda_k(u_{xx})}{k},$$
where $\lambda_1(u_{xx})\le\cdots\le\lambda_d(u_{xx})$ are the usual eigenvalues of $u_{xx}$ ordered from small to large.
\end{example}

It is also worth mentioning that 
$$\lambda_{\min}[H_m](tI+\gamma)=t+\lambda_{\min}[H_m](\gamma), \quad\forall \gamma\in \mathbb S^d,$$
so we have on $\Gamma'_{H_m}$
$$\operatorname{tr}\big\{(\lambda_{\min}[H_m])_{\gamma^{ij}}(\gamma)\big\}=\frac{d}{dt}\lambda_{\min}[H_m](tI+\gamma)\Big|_{t=0}=1.$$
Consequently (\ref{rhess}) is equivalent to the Dirichlet problem for the  Bellman equation
\begin{equation}
\left\{\begin{array}{rcll}
\displaystyle\inf_{\alpha\in\Gamma'_{H_m}}\big\{(\lambda_{\min}[H_m])_{\gamma^{ij}}(\alpha)u_{x_ix_j}-f\big\}&=&0&\text{in }D\\
u&=&\varphi  &\text{on }\partial D.
\end{array}
\right.
\end{equation}

Then the (bounded and Borel) probabilistic solution $u$ to (\ref{mineigen}) can be defined by (\ref{grp}) and (\ref{grp2}), with
$$a^{ij}(\alpha)=(\lambda_{\min}[H_m])_{\gamma^{ij}}(\alpha),\quad h\equiv1.$$
Applying Theorem \ref{rhess}, we have:
\begin{enumerate}
\item If $f,\varphi\in C^{0,1}(\bar D)$, then $u\in C^{0,1}_{loc}(D)\cap C(\bar D)$, and the first (generalized) derivatives satisfy the estimate (\ref{cfirstma}).
\item If $f,\varphi\in C^{1,1}(\bar D)$, then $u\in C^{1,1}_{loc}(D)\cap C^{0,1}(\bar D)$ and the second (generalized) derivatives satisfy the estimate (\ref{csecondma}). Meanwhile, $u$ is the unique strong solution to (\ref{mineigen}).
\end{enumerate}

The interested readers may establish similar results for complex cases. Also, since 
$$\lambda_{\max}[H_m](M)=-\lambda_{\min}[H_m](-M),$$
similar results can be obtained for the Dirichlet problem (with $f\le0$):
\begin{equation*}
\left\{\begin{array}{rcll}
\lambda_{\max}\big[H_m\big](u_{xx})&=&f  &\text{in }D\\ 
u_{xx}&\in&-\overline\Gamma_{H_m}&\text{in }D\\
u&=&\varphi  &\text{on }\partial D
\end{array}
\right.
\end{equation*}
and its complex counterpart.

\subsection{General Hessian quotient equations} Given an $m$-th degree homogeneous hyperbolic polynomial $H_m$ over $\mathbb S^d$ with respect to $I$, for $0\le k\le m$, we denote by $H^{(k)}_m$ the $k$-th order derivative of $H_m$ in the direction of $I$, i.e.
$$H_m^{(k)}(M)=\frac{d^k}{dt^k}H_m(M+tI)\Big|_{t=0}.$$
We also abbreviate $H_m^{(1)}$ by $H'_m$. We have seen in Section \ref{section2} that $H^{(k)}_m$ are $(m-k)$-th degree homogeneous hyperbolic polynomials, and
$$\Gamma_{H_m}\subset \Gamma_{H'_m}\subset \cdots\subset \Gamma_{H^{(k)}_m}\subset\cdots$$
Here we make use to $H_m^{(k)}$ to construct $\overline{\mathbb S_+^d}$-monotonic and concave functions for which Theorems \ref{rhess} is applicable.

For the sake of completeness, we start from showing the (somewhat well-known) fact that the quotient $H_m/H'_m$ is 
concave on $\Gamma_{H_m}$. 
When $H_m(M)=\sigma_k(M)$, the elementary symmetric function of the eigenvalues of $M$, the concavity of $H_m/H'_m$ on $\Gamma_{H_m}$ has been established in Theorem 6.4, \cite{MR1284912}. The general result can be derived by applying Theorem 8.4, \cite{MR3055586}. To be precise, by using Logarithmic differentiation on $p(t)=H_m(M+tI)$ at $t=0$, we have
\begin{equation}\label{logdiff}
H'_m(\gamma)=H_m(\gamma)\sum_{i=1}^m\frac{1}{\lambda_i(\gamma)}.
\end{equation}
By Theorem 8.4, \cite{MR3055586}, to show that $H_m/H'_m$ is concave on $\Gamma_{H_m}$, it suffices to prove that
$q(x)=1/(\sum_{i=1}^{m}1/x_i)$ is concave on $\mathbb R_+^m$. Notice that $q(x)$ is $1$-homogeneous, so it suffices to show that $q(x)\le q_{(x)}(y)
$
 for all $x,y\in \mathbb R_+^m$, which is indeed the Cauchy-Schwartz inequality
$$\Big(\sum_{i=1}^m1/y_i\Big)^2\le\Big(\sum_{i=1}^{m}1/x_i\Big)\Big(\sum_{i=1}^mx_i/y_i^2\Big).$$
We next denote $H_m/H_m^{(k)}$ by $Q_{m,k}$ and notice that on $\Gamma_{H_m}$
\allowdisplaybreaks
\begin{align*}
Q_{m,k}^{1/k}(\alpha+\beta)=&\Bigg(\prod_{i=1}^k\frac{H_m^{(i-1)}}{H_m^{(i)}}(\alpha+\beta)\Bigg)^{1/k}\\
\ge&\Bigg(\prod_{i=1}^k\Big(\frac{H_m^{(i-1)}}{H_m^{(i)}}(\alpha)+\frac{H_m^{(i-1)}}{H_m^{(i)}}(\beta)\Big)\Bigg)^{1/k}\\
\ge&\Bigg(\prod_{i=1}^k\frac{H_m^{(i-1)}}{H_m^{(i)}}(\alpha)\Bigg)^{1/k}+\Bigg(\prod_{i=1}^k\frac{H_m^{(i-1)}}{H_m^{(i)}}(\alpha)\Bigg)^{1/k}\\
=&Q_{m,k}^{1/k}(\alpha)+Q_{m,k}^{1/k}(\beta),
\end{align*}
which implies that $Q_{m,k}^{1/k}$ is superlinear.

Therefore we can apply Theorem \ref{rhess} with $F=Q_{m,k}^{1/k}$ and $\Gamma=\Gamma_{H_m}$ to study the Dirichlet problem
\begin{equation}\label{quot}
\left\{\begin{array}{rcll}
Q_{m,k}(u_{xx})&=&f^k  &\text{in }D\\ 
u_{xx}&\in&\overline\Gamma_{H_m}&\text{in }D\\
u&=&\varphi  &\text{on }\partial D,
\end{array}
\right.
\end{equation}
provided that $H_m$ is symmetric, $\Gamma_{H_m}$ is elliptic and $D$ is strictly $\Gamma_{H_m}$-convex.
\begin{example}
In particular, if $H_m=\sigma_{m,d}[\det;I]$, then 
$$H_m^{(k)}=k!{d\choose m}\sigma_{m-k,d}[\det;I].$$ 
In this situation,
$$k!{d\choose m}Q_{m,k}(u_{xx})=\frac{\sigma_m(u_{xx})}{\sigma_{m-k}(u_{xx})},\quad \forall 1\le k\le m\le d,$$
where $\sigma_m(u_{xx})$ and $\sigma_{m-k}(u_{xx})$ are the $m$-th and $(m-k)$-th order elementary symmetric functions of the eigenvalues of the symmetric matrix $u_{xx}$ with $\sigma_0\equiv1$. This is the so-called Hessian quotient operator in previous literature.
\end{example}

The (bounded and Borel) probabilistic solution $u$ to (\ref{quot}) can be defined by (\ref{grp}) and (\ref{grp2}), with
\begin{align*}
a^{ij}(\alpha)=&\frac{(H_m)_{\gamma^{ij}}(\alpha)H_m^{(k)}(\alpha)-(H_m^{(k)})_{\gamma^{ij}}(\alpha)H_m(\alpha)}{(H_m)_{(I)}(\alpha)H_m^{(k)}(\alpha)-(H_m^{(k)})_{(I)}(\alpha)H_m(\alpha)},\\
h(\alpha)=&\frac{kH_m^{1-1/k}(\alpha)(H_m^{(k)})^{1+1/k}(\alpha)}{(H_m)_{(I)}(\alpha)H_m^{(k)}(\alpha)-(H_m^{(k)})_{(I)}(\alpha)H_m(\alpha)}.
\end{align*}
Applying Theorem \ref{rhess}, we have:
\begin{enumerate}
\item If $f,\varphi\in C^{0,1}(\bar D)$, then $u\in C^{0,1}_{loc}(D)\cap C(\bar D)$, and the first (generalized) derivatives satisfy the estimate (\ref{cfirstma}).
\item If $f,\varphi\in C^{1,1}(\bar D)$, then $u\in C^{1,1}_{loc}(D)\cap C^{0,1}(\bar D)$ and the second (generalized) derivatives satisfy the estimate (\ref{csecondma}). Meanwhile, $u$ is the unique strong solution to (\ref{quot}).
\end{enumerate}

The interested readers may establish similar results for complex cases.

\subsection{Geometric mean type equations} Another rich set of examples comes from the geometric mean of superlinear functions. Suppose for each $1\le i\le n$, $F^{(i)}$ is superlinear and positive on $\Gamma_{F^{(i)}}$. Define $F=(\prod_{i=1}^nF^{(i)})^{1/n}$. Then on $\Gamma=\cap_{i=1}^n\Gamma_{F^{(i)}}$, we have
\begin{align*}
F(\alpha+\beta)\ge&\bigg(\prod_{i=1}^n\Big(F^{(i)}(\alpha)+F^{(i)}(\beta)\Big)\bigg)^{1/n}\\
\ge&\bigg(\prod_{i=1}^nF^{(i)}(\alpha)\bigg)^{1/n}+\bigg(\prod_{i=1}^nF^{(i)}(\beta)\bigg)^{1/n}=F(\alpha)+F(\beta),
\end{align*}
i.e. the geometric mean of superlinear (positive) functions is still superlinear. 
Consequently, Theorems \ref{rhess} and \ref{chess} are applicable to the Dirichlet problem
\begin{equation}\label{geomean}
\left\{\begin{array}{rcll}
(\prod_{i=1}^nF^{(i)})(u_{xx})&=&f^n  &\text{in }D\\ 
u_{xx}&\in&\overline{\cap_{i=1}^n\Gamma_{F^{(i)}}}&\text{in }D\\
u&=&\varphi  &\text{on }\partial D,
\end{array}
\right.
\end{equation}
provided that for each $1\le i\le n$, $F^{(i)}$ is symmetric, $\Gamma_{F^{(i)}}$ is elliptic and $D$ is strictly $\Gamma_{F^{(i)}}$-convex.

\begin{example}
For the sake of simplicity, we assume that for $1\le k\le d$, $\sigma_k=\sigma_k[\det;I]$ and $\mu_k=\mu_k[\det;I]$. If we let $F^{(i)}=\mu_{k_i}^{1/{d\choose k_i}}$, the partial differential equation in (\ref{geomean}) is
$$\bigg(\mu_{k_1}^{1/{d\choose k_1}}\cdots\mu_{k_n}^{1/{d\choose k_n}}\bigg)(u_{xx})=f^n,$$
where $1\le k_i\le d, \forall 1\le i\le n$.

We see also from the previous subsection that $\sigma_m/\sigma_{m-1}$ can play the role of $F^{(i)}$. Based on this fact  we can construct many possibly quite interesting examples which can fit into (\ref{geomean}), e.g.
$$\prod_{j=1}^{[d/2]}\frac{\sigma_{2j}}{\sigma_{2j-1}}(u_{xx})=f^{[d/2]},$$
$$\frac{\sigma_d^n}{\sigma_{k_1}\cdots\sigma_{k_n}}(u_{xx})=f^{nd-(k_1+\cdots+k_n)},$$
with $1\le k_i\le d, \forall 1\le i\le n$.
\end{example}

The (bounded and Borel) probabilistic solution $u$ to (\ref{geomean}) can be defined by (\ref{grp}) and (\ref{grp2}), with
\begin{align*}
\displaystyle a^{ij}(\alpha)=\frac{\sum_{i=1}^nF_{\gamma^{ij}}^{(i)}\big/F^{(i)}}{\sum_{i=1}^n\operatorname{tr}F_{\gamma^{ij}}^{(i)}\big/F^{(i)}},\quad h(\alpha)=\Bigg(\prod_{i=1}^nF^{(i)}\sum_{i=1}^n\frac{\operatorname{tr}F_{\gamma^{ij}}^{(i)}}{F^{(i)}}\Bigg)^{-1}.
\end{align*}
Applying Theorem \ref{rhess}, we have:
\begin{enumerate}
\item If $f,\varphi\in C^{0,1}(\bar D)$, then $u\in C^{0,1}_{loc}(D)\cap C(\bar D)$, and the first (generalized) derivatives satisfy the estimate (\ref{cfirstma}).
\item If $f,\varphi\in C^{1,1}(\bar D)$, then $u\in C^{1,1}_{loc}(D)\cap C^{0,1}(\bar D)$ and the second (generalized) derivatives satisfy the estimate (\ref{csecondma}). Meanwhile, $u$ is the unique strong solution to (\ref{geomean}).
\end{enumerate}

Again, the interested readers may establish similar results for complex cases.

\subsection{Translation and Calabi-Yau type equations} Theorems \ref{rhess} and \ref{chess} are also applicable to study new problems obtained by translation. This time we take the complex case as the example, and the interested reader can study the corresponding real case.

Let $w$ be a $C^2$ function and $v=u-w$. The function $u$ satisfies (\ref{chess}) (say, in the sense of viscosity solution or strong solution) if and only if $v$ satisfies
\begin{equation}\label{trans}
\left\{\begin{array}{rcll}
G(v_{z\bar z}+w_{z\bar z})&=&f  &\text{in }D\\ 
v_{z\bar z}+w_{z\bar z}&\in&\overline\Theta&\text{in }D\\
u&=&\varphi  &\text{on }\partial D
\end{array}
\right.
\end{equation}
in the same sense. 
\begin{example}
In particular, for any $g=(g_{i\bar j})\in \mathbb H_d$, let $w(z)=g_{i\bar j}z^i\bar z^j$, then $w_{z\bar z}=g$. If we let $G=\det$ and $\Theta=\mathbb H_+^d$, then by Theorem \ref{chess}, we conclude that the Dirichlet problem
\begin{equation}\label{cy}
\left\{\begin{array}{rcll}
\det(u_{i\bar j}+g_{i\bar j})&=&f  &\text{in }D\\ 
u_{i\bar j}+g_{i\bar j}&\ge&0&\text{in }D\\
u&=&\varphi  &\text{on }\partial D
\end{array}
\right.
\end{equation}
has a unique $C^{0,1}_{loc}(D)\cap C(\bar D)$-viscosity solution when $f,\varphi\in C^{0,1}(\bar D)$, and has a unique $C^{1,1}_{loc}(D)\cap C^{0,1}(\bar D)$-strong solution when $f,\varphi\in C^{1,1}(\bar D)$. The system (\ref{cy}) is of simplest Calabi-Yau type.
\end{example}

\section{Acknowledgements}
The author is indebted to Nicolai V. Krylov for helpful  discussions.

\providecommand{\bysame}{\leavevmode\hbox to3em{\hrulefill}\thinspace}
\providecommand{\MR}{\relax\ifhmode\unskip\space\fi MR }
\providecommand{\MRhref}[2]{%
  \href{http://www.ams.org/mathscinet-getitem?mr=#1}{#2}
}
\providecommand{\href}[2]{#2}

\end{document}